\newcommand{\isep}{\mathrel{{.}\,{.}}\nobreak}
\DeclareSymbolFont{cyrletters}{OT2}{wncyr}{m}{n}
\DeclareMathSymbol{\Sha}{\mathalpha}{cyrletters}{"58}
\newtheorem{theorem}{Theorem}
\newtheorem{corollary}[theorem]{Corollary}
\newtheorem{lemma}[theorem]{Lemma}
\newtheorem{remark}{Remark}
\newcommand{\MP}{\mathrm{MP},y}
\newcommand{\Tr}{{\bf Tr}}
\newcommand{\tr}{\mathrm{Tr}}
\newcommand{\ep}{\varepsilon}
\newcommand{\E}{\mathbb{E}}
\newcommand{\G}{\mathcal{G}_n}
\newcommand{\M}{\mathcal{M}}
\newcommand{\C}{\mathcal{C}}
\newcommand{\Cb}{\C^\bot}
\newcommand{\D}{\mathcal{D}}
\newcommand{\F}{\mathbb{F}}
\renewcommand{\P}{\mathbb{P}}
\newcommand{\db}{d^\bot}
\newcommand{\supp}{\mathrm{supp}}
\renewcommand{\l}{\ell}
\newcommand{\Gl}{G^{(\l)}}
\newcommand{\R}{\mathcal{R}}
\newcommand{\Al}{A_\l}
\newcommand{\Yl}{Y_\l}
\newcommand{\Zl}{Z_\l}
\newcommand{\dl}{\Delta_\l}
\newcommand{\an}{\alpha_n}
\renewcommand{\i}{\mathrm{i}}
\newcommand{\weta}{\widetilde{\eta}}
\renewcommand{\d}{\mathrm{d}}
\newcommand{\hmu}{\hat{\mu}_n}
\newcommand{\hs}{\hat{s}_n}
\newcommand{\pa}{\partial}
\newcommand{\st}{\mathbf{S}_\tau}
\newcommand{\rmp}{\varrho_{\MP}}
\newcommand{\I}{\mathbf{I}}
\title{Convergence Rate of Empirical Spectral Distribution of Random Matrices from Linear Codes}
\author{Chin Hei Chan\thanks{C. Chan is at the Dept. of Mathematics, Hong Kong University of Science and Technology, Clear Water Bay, Kowloon, Hong Kong (email: chchanam@connect.ust.hk).}, Vahid Tarokh\thanks{V. Tarokh is at the Department of Electrical and Computer Engineering, Duke University, Durham, NC, USA (email: vahid.tarokh@duke.edu).} and Maosheng Xiong\thanks{M. Xiong is at the Dept. of Mathematics, Hong Kong University of Science and Technology, Clear Water Bay, Kowloon, Hong Kong (email: mamsxiong@ust.hk). The research of M. Xiong was supported by RGC grant number 16303615 from Hong Kong.}}
\begin{document}
\maketitle
\begin{abstract}
It is known that the empirical spectral distribution of random matrices obtained from linear codes of increasing length converges to the well-known Marchenko-Pastur law, if the Hamming distance of the dual codes is at least 5. In this paper, we prove that the convergence rate in probability is at least of the order $n^{-1/4}$ where $n$ is the length of the code.

\end{abstract}

\begin{keywords}
Group randomness, linear code, dual distance, empirical spectral measure, random matrix theory, Marchenko-Pastur law.
\end{keywords}
\section{Introduction}\label{intro}
Random matrix theory is the study of matrices whose entries are random variables. Of particular interest is the study of eigenvalue statistics of random matrices such as the empirical spectral measure. It has been broadly investigated in a wide variety of areas, including statistics \cite{Wis}, number theory \cite{MET}, economics \cite{econ}, theoretical physics \cite{Wig} and communication theory \cite{TUL}.

Most of the matrix models in the literature are random matrices with independent entries. In a recent series of work (initiated in  \cite{Tarokh0} and developed further in \cite{Tarokh1,Tarokh2,OQBT}), the authors considered a class of sample-covariance type matrices formed randomly from linear codes over a finite field, and proved that if the Hamming distance of the dual codes is at least 5, then as the length of the codes goes to infinity, the empirical spectral distribution of the random matrices obtained in this way converges to the well-known Marchenko-Pastur (MP) law. Since truly random matrices (i.e. random matrices with i.i.d. entries) of large size satisfy this property, this can be interpreted as that sequences from linear codes of dual distance at least 5 behave like random among themselves. This is a new pseudo-random test for sequences and is called a ``group randomness'' property \cite{Tarokh1}. It may have many potential applications.

How fast does the empirical spectral distribution converge to the MP law? This question is interesting in its own rights, and important in applications as one may wish to use linear codes of proper length to generate pseudo-random matrices. Along with proving the convergence in expectation, the authors in \cite{OQBT} obtained a convergence rate of the order $\frac{\log\log n}{\log n}$ where $n$ is the length of the code. This is quite unsatisfactory, as the numerical data showed clearly that the convergence is rather fast with respect to $n$. In this paper, we prove that the convergence rate is indeed at least of the order $n^{-\frac{1}{4}}$ in probability. This substantially improves the previous result. 

To introduce our main result, we need some notation.

Let $\C$ be a linear code of length $n$ and dimension $k$ over the finite field $\F_q$ of order $q$, where $q$ is a prime power. $\C$ is called an $[n,k,d]_q$ linar code for short. The most interesting case is the binary linear codes, corresponding to $q=2$. The dual code $\Cb$ consists of the $n$-tuples in $\F_q$ which are orthogonal to all codewords of $\C$ under the standard inner product. Clearly, $\Cb$ is also a linear code. Denote by $\db$ the Hamming distance of $\Cb$. It is called the \emph{dual distance} of $\C$.

Let $\psi: \F_q \to \mathbb{C}^\times$ be the standard additive character. To be more precise, if $\F_q$ has characteristic $l$, which is a prime number, then $\psi$ is given by $\beta \mapsto \exp\left(2\pi\sqrt{-1}\tr_{q|l}(\beta)/l\right)$, where $\tr_{q|l}$ is the absolute trace mapping from $\F_q$ to $\F_l$. In particular, if $q=l=2$, then the map $\psi: \F_2:=\{0,1\} \to \{-1,1\}$ is defined as $\beta \mapsto (-1)^{\beta}$. We extend $\psi$ component-wise to $\F_q^n$ and obtain the map $\psi: \F_q^n \to (\mathbb{C}^\times)^n$. Denote $\D:=\psi(\C)$.

Denote by $\Phi_n$ a $p \times n$ matrix whose rows are chosen from $\D$ uniformly and independently. This makes the set $\D^p$ a probability space with the uniform probability.

Let $\G$ be the Gram matrix of $X=\frac{1}{\sqrt{n}}\Phi_n$, that is,
\begin{equation}\label{Gram}
\G=XX^*=\frac{1}{n}\Phi_n\Phi_n^*,
\end{equation}
where $A^*$ means the conjugate transpose of the matrix $A$. Let $\mu_n$ be the empirical spectral measure of $\G$, that is,
\begin{equation}\label{mun}
\mu_n=\frac{1}{p}\sum_{j=1}^{p} \delta_{\lambda_j},
\end{equation}
where $\lambda_1\leq \lambda_2\leq \cdots \leq \lambda_p$ are the eigenvalues of $\G$ and $\delta_\lambda$ is the Dirac measure at the point $\lambda$. Note that $\mu_n$ is a random measure, that is, for any interval $\I \subset \mathbb{R}$, the value $\mu_n(\I)$ is a random variable with respect to the probability space $\D^p$. Our main result is as follows.



\begin{theorem} \label{thm}
Assume that $y:=p/n \in (0,1)$ is fixed. If $\db \geq 5$, then
\begin{equation}\label{SD}
|\mu_n(\I)-\rmp(\I)| \prec n^{-\frac{1}{4}}
\end{equation}
uniformly for all intervals $\I \subset \mathbb{R}$. Here $\rmp$ is the empirical spectral measure of the Marchenko-Pastur law whose density function is given by
\begin{equation}\label{mppdf}
\d\rmp(x)=\frac{1}{2\pi xy}\sqrt{(b-x)(x-a)}\mathbbm{1}_{[a,b]}\d x,
\end{equation}
where the constant $a$ and $b$ are defined as
\begin{equation}\label{ab}
a=(1-\sqrt{y})^2, b=(1+\sqrt{y})^2,
\end{equation}
and $\mathbbm{1}_{[a,b]}$ is the indicator function of the interval $[a,b]$.
\end{theorem}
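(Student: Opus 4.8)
The plan is to pass from $\mu_n$ to its Stieltjes transform, run a self-consistent-equation argument for the Marchenko--Pastur law, and then convert the outcome into a discrepancy bound via a quantitative smoothing inequality; the hypothesis $\db\ge 5$ will enter in exactly one place, making a uniformly random codeword and a vector of independent uniform phases agree in all mixed moments that involve at most four coordinates. Since $|\mu_n(I)-\rmp(I)|\le 2\sup_x|F_{\mu_n}(x)-F_{\rmp}(x)|$ for every interval $I$, it suffices to bound the Kolmogorov distance, and by a Bai-type (Berry--Esseen) inequality, for a scale $\eta>0$ to be optimized at the end and an $A$ slightly larger than $b$,
\begin{equation*}
\sup_x\bigl|F_{\mu_n}(x)-F_{\rmp}(x)\bigr|\;\lesssim\;\int_{-A}^{A}\bigl|s_n(E+\i\eta)-s_{\mathrm{MP}}(E+\i\eta)\bigr|\,\d E\;+\;\eta\;+\;(\text{tail and edge terms}),
\end{equation*}
where $s_n(z)=\frac1p\tr(\G-z)^{-1}$ and $s_{\mathrm{MP}}$ is the Stieltjes transform of $\rmp$; the tail and edge terms are harmless once one knows that no eigenvalue of $\G$ sits appreciably outside $[a,b]$, which is a soft consequence of the estimates below applied on lines slightly beyond $[a,b]$.

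To obtain a self-consistent equation, note first that since every entry of a row $\phi_j$ of $\Phi_n$ has modulus $1$, the diagonal entries of $\G$ are exactly $1$. The standard Schur-complement manipulation on the $j$-th diagonal entry of $R=(\G-z)^{-1}$ then gives $R_{jj}^{-1}=-z-z\cdot\frac1n\phi_j^{*}B^{(j)}\phi_j$, where $B^{(j)}$ is the resolvent of the $n\times n$ companion matrix built from the remaining $p-1$ rows and is therefore independent of $\phi_j$; moreover $|\phi_{jk}|=1$ forces the diagonal part of this quadratic form to be \emph{exactly} $\frac1n\tr B^{(j)}$, so that $\frac1n\phi_j^{*}B^{(j)}\phi_j=\frac1n\tr B^{(j)}+Z_j$ with $Z_j=\frac1n\sum_{k\ne\ell}\overline{(\phi_j)_k}\,B^{(j)}_{k\ell}\,(\phi_j)_\ell$. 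The identity $\E[\psi(\langle v,c\rangle)]=\mathbbm{1}[v\in\Cb]$ for a uniformly random codeword $c$, combined with $\db\ge 5$, which forbids nonzero dual words of weight at most $4$, yields $\E Z_j=0$ (a weight-$2$ character sum) and $\E|Z_j|^2\lesssim\frac1{n^2}\tr\bigl(B^{(j)}B^{(j)*}\bigr)\lesssim\frac1{n\eta}$ (a weight-$\le 4$ sum); these are the same values as for i.i.d.\ uniform phases, precisely because only four coordinates of a single codeword ever occur with a nonzero coefficient. Averaging the relations $R_{jj}^{-1}=-z-z(\frac1n\tr B^{(j)}+Z_j)$ over $j$ and invoking the companion relation between $\frac1p\tr R$ and $\frac1n\tr B$ then produces
\begin{equation*}
y\,z\,s_n(z)^2+(z+y-1)\,s_n(z)+1=\delta_n(z),
\end{equation*}
with $\delta_n$ a weighted average of the $Z_j$ plus $O(\tfrac1{n\eta})$ deterministic errors.

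To pass from this to a bound on $s_n-s_{\mathrm{MP}}$ one uses stability of the Marchenko--Pastur self-consistent map $s\mapsto yzs^2+(z+y-1)s+1$ on $\{\mathrm{Im}\,z=\eta\}$: its derivative at $s_{\mathrm{MP}}$ is bounded away from $0$ in the bulk and of order $\sqrt{\kappa+\eta}$ at distance $\kappa$ from an edge, whence $|s_n(z)-s_{\mathrm{MP}}(z)|\lesssim|\delta_n(z)|/\sqrt{\kappa+\eta}$ after a self-improving bootstrap, started from the trivial bound $|s_n-s_{\mathrm{MP}}|\lesssim 1$, near $a$ and $b$. The size of $\delta_n$ is then controlled by two complementary ingredients: the $L^2$ estimate for the $Z_j$ above, which after averaging over $j$ and over $E$ and Chebyshev's inequality gives the required smallness in probability; and the genuine independence of the $p$ rows of $\Phi_n$, whose bounded entries make $s_n$ a function with bounded differences of order $\frac1{p\eta}$ (by the resolvent identity), so that $s_n-\E s_n$ concentrates with exponentially small failure probability by the Azuma--McDiarmid inequality, $\E\delta_n$ being handled in the same way. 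Carrying the $\eta$-dependence through every term, $\int_{-A}^{A}|s_n-s_{\mathrm{MP}}|\,\d E$ comes out of order $\frac1{\sqrt n\,\eta}$, so the smoothing inequality yields a bound of the form $\eta+\frac1{\sqrt n\,\eta}$, which is minimized at $\eta\asymp n^{-1/4}$ and gives \eqref{SD}.

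The main obstacle is exactly the error control in the self-consistent equation under the sole algebraic input, $\db\ge 5$: it matches the moments of a random codeword to those of an i.i.d.\ uniform phase vector only up to order four, so the quadratic forms $Z_j$ carry only an $L^2$ bound rather than the higher-moment (sub-exponential) concentration one would have in the i.i.d.\ model --- a bound on $\E|Z_j|^4$ would require vanishing of character sums of weight up to $8$, which $\db\ge 5$ does not guarantee. This is precisely what forbids descending below the scale $\eta\sim n^{-1/4}$ and so caps the rate at $n^{-1/4}$; the high-probability content of the statement therefore has to be extracted from the one genuinely independent structure in the model, the $p$ rows of $\Phi_n$, through bounded-difference concentration, while the remaining $L^2$-only terms are dealt with by Chebyshev after averaging. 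A secondary technical difficulty lies at the spectral edges $a$ and $b$, where the square-root vanishing of $\d\rmp$ weakens the stability of the self-consistent equation and makes the bootstrap necessary, and where one must also rule out eigenvalues of $\G$ lying appreciably outside $[a,b]$; these are handled by edge-adapted versions of the estimates above.
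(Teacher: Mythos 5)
Your overall strategy — pass to the Stieltjes transform, derive a self-consistent equation by a Schur-complement step, use $\db\ge 5$ to control the off-diagonal quadratic forms up to fourth moments, exploit the genuine independence of the $p$ rows via bounded-difference concentration, and finally convert the Stieltjes-transform estimate into a Kolmogorov-distance bound — is the same as the paper's. The one structural difference is the final conversion step: the paper invokes the Helffer--Sj\"ostrand formula with a smoothed indicator at scale $\weta\sim n^{-1/4}$, whereas you invoke a Bai-type (Berry--Esseen) smoothing inequality for Stieltjes transforms; both are standard and both lead to the same optimization $\eta\asymp n^{-1/4}$, since the edge singularity in $|s_n-s_{\MP}|$ of order $n^{-1}\eta^{-7/2}$ is integrable over $E$ (contributing $\sim n^{-1}\eta^{-3}$ after integration), so the two routes give identical rates. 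The paper also handles the error $\E|Z_\ell|^2=O(n^{-1}\eta^{-2})$ using only the trivial a priori bound $|\R_{jj}^{(\ell)}|\le\eta^{-1}$ and Wald's identity, avoiding the bootstrap you allude to; this cruder bound is sufficient because the integration (or the Helffer--Sj\"ostrand contour) absorbs the extra power of $\eta^{-1}$.

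There is, however, one genuine soft spot in your plan: you propose to turn the $L^2$-only control on the $Z_j$ into a high-probability statement ``after averaging over $j$ and over $E$ and Chebyshev's inequality.'' Chebyshev applied to a quantity with only a second-moment bound gives a failure probability of a \emph{fixed} polynomial order, whereas the stochastic-domination conclusion $\prec$ in the theorem requires failure probability $n^{-D}$ for \emph{every} $D$. The $Z_j$ are also mutually correlated (they all involve the resolvents of overlapping submatrices), so averaging over $j$ does not obviously reduce variance, and averaging over $E$ has no built-in decorrelation mechanism either. The paper circumvents this precisely by working with the \emph{expected} Stieltjes transform $s_n=\E s_{\G}$: the $L^2$ bounds on $Z_\ell$ and $Y_\ell$ then control the \emph{deterministic} perturbation $\Delta(z)=O(n^{-1}\eta^{-3})$ in the self-consistent equation for $s_n$, so that Lemma~\ref{diff} gives a deterministic bound $|s_n-s_{\MP}|=O(n^{-1}\eta^{-7/2})$; all of the randomness is isolated in the single quantity $s_{\G}-s_n$, and that is the only place where a high-probability estimate is needed, where McDiarmid's inequality (Lemma~\ref{deviation}) gives the required super-polynomial concentration. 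You do mention McDiarmid at the end, and if you reorganize so that the $L^2$ input is used only to bound the deterministic perturbation of $\E s_{\G}$, with McDiarmid handling $s_{\G}-\E s_{\G}$, your argument lines up with the paper's and the Chebyshev step becomes unnecessary.
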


\begin{remark}
The symbol $\prec$ in (\ref{SD}) is a standard notation for ``stochastic domination'' in probability theory (see \cite{Local law} for details). Here it means that for any $\ep >0$ and any $D>0$, there is a quantity $n_0(\ep,D)$, such that whenever $n \geq n_0(\ep,D)$, we have
$$\sup_{\I \, \subset \, \mathbb{R}} \P\left[|\mu_n(\I)-\rmp(\I)| > n^{-\frac{1}{4}+\ep} \right] \leq n^{-D},$$
where $\P$ is the probability with respect to $\D^p$ and the supremum can also be taken over all linear codes $\C$ of length $n$ over $\F_q$ with $\db \geq 5$.
\end{remark}

\begin{remark}
Theorem \ref{thm} is reminiscent of a well-known result of Sidel'nikov (\cite{MacWilliams, Sid}) which states that for any $[n,k,d]$ binary linear code $\C$ with dual distance $\db \geq 3$, one has  
$$|A(z)-\Phi(z)| \leq \frac{9}{\sqrt{\db}}\, .$$
Here $A(z)$ is the normalized cumulative weight distribution of $\C$ and 
$$\Phi(z)=\frac{1}{\sqrt{2 \pi}} \int_{-\infty}^z e^{-t^2/2}\, \mathrm{d} t. $$ 
Hence the ``randomness'' of the weight distribution of $\C$ is ensured if $d^\bot$ is sufficiently large. With this respect Theorem \ref{thm} is a little surprising since the condition $d^\bot \ge 5$ already ensures a fast convergence rate to the MP law (see Equation (\ref{SD})). We emphasize that the condition $d^\bot \ge 5$ is also optimal: the work \cite{Tarokh1} showed that the empirical spectral distribution of random matrices based on binary Simplex (shortened first-order Reed-Muller) codes with $\db=3$ does not converge to the MP law; a similar calculation shows that the empirical spectral distribution of random matrices based on binary first-order Reed-Muller codes with $\db=4$ does not converge to the MP law either. 
\end{remark}


\begin{remark}
It seems quite possible to extend our results to nonlinear codes, where the dual distance $\db$ is defined as in \cite[Chapter 5]{MacWilliams}. In this paper, however, we focus only on linear codes.
\end{remark}

\begin{remark}
For application purposes, from Theorem \ref{thm}, binary linear codes of dual distance 5 with large length and small dimension are desirable as they can be used to generate random matrices efficiently. Here we mention two constructions of binary linear codes with parameters $[2^m-1,2m]$ and dual distance 5. The first family is the dual of primitive double-error correcting BCH codes (\cite{Ding}). The second family of such codes, which includes the well-known Gold codes, can be constructed as follows: Let $f:\F_{2^m} \to \F_{2^m}$ be a function such that $f(0)=0$. Let $n=2^m-1$ and $\alpha$ be a primitive element of $\F_{2^m}$. Define a matrix
$$H_f:=\begin{bmatrix}
1 & \alpha & \alpha^2 & \cdots & \alpha^{n-1}\\
f(1) & f(\alpha) & f(\alpha^2) & \cdots & f(\alpha^{n-1})
\end{bmatrix}.$$
Given a basis of $\F_{2^m}$ over $\F_2:=\{0,1\}$, each element of $\F_{2^m}$ can be identified as an $m \times 1$ column vector in $\F_2$, hence the $H_f$ above can be considered as a binary matrix of size $2m \times n$. Denote by $\C_f$ the binary linear code obtained from $H_f$ as a generator matrix. Note that $\C_f$ has length $2^m-1$ and dimension $2m$. It is known that the dual distance of $\C_f$ is 5 if and only if $f$ is an almost perfect nonlinear (APN) function \cite{Blon, POTT}. Since there are many APNs when $m$ is odd, this provides a general construction of binary linear codes of dual distance 5 which may be of interest for applications. 
\end{remark}

\begin{remark}
Binary linear codes can be used to construct deterministic sensing matrices which satisfy the important ``statistical restricted isometry property'' (\cite{RIP, DSM}). From an $[n,k,d]$ binary linear code $\C$, letting $N=2^k$ and using the same notation as in Theorem \ref{thm} for easy  comparison, one obtains an $N \times n$ matrix $\Phi_{\C}$ whose rows consist of all the codewords of $\C$ under the map $\psi$ (so that $\Phi_{\C}$ is a matrix of entries $\pm 1$). The probability space is choosing $p$ distinct rows uniformly at random from $\Phi_{\C}$ to form a submatrix $\Phi_n$. Then the sensing matrix $\frac{1}{\sqrt{n}}\Phi_{\C}$ is said to have the $(p,\delta,\ep)$-${\mathrm{StRIP}}$ if 
\begin{eqnarray} \label{1:strip} \P\left(\left\|\frac{1}{n}\Phi_n\Phi_n^*-I_p\right\|> \delta\right) < \ep. \end{eqnarray}
Equation (\ref{1:strip}) states that the event that all the eigenvalues of the matrix $\frac{1}{n}\Phi_n\Phi_n^*$ lie in the interval $[1-\delta, 1+\delta]$ has probability $1-\ep$. Since  the probability space in (\ref{1:strip}) is essentially the same as that in (\ref{SD}) of Theorem \ref{thm}, it can be seen that Theorem \ref{thm} provides a strong and much more precise description about how the eigenvalues of $\frac{1}{n}\Phi_n\Phi_n^*$ are distributed along the real line, but Theorem \ref{thm} falls short of proving (\ref{1:strip}). It seems possible to prove (\ref{1:strip}) by considering a slightly different normalization of the random matrix from linear codes as done in (\cite{ChanK}). We might come back to this question in the future. 
\end{remark}

For truly random matrices with i.i.d. entries, finding the rate of convergence has been a long-standing question, starting from \cite{ConI,ConII,Asym} in early 1990s. Great progress has been made in the last 10 years, culminating in achieving the optimal rate of convergence $n^{-1}$ where $n$ is the size of the matrix (see \cite{Local law,UnWig,UnCov}). The major technique is the use of the Stieltjes transform. In this paper we also use this technique.

The convergence rate problem for the empirical spectral distribution of large sample covariance random matrices has been studied for example in \cite{ConII, MP local law}, and in particular in \cite{MP local law} an optimal rate of convergence of order $n^{-1}$ was obtained under quite general conditions. However, despite our best effort, none of the techniques in \cite{ConII} and \cite{MP local law} can be easily applied directly to our setting. Instead we use a combination of ideas from \cite{ConII} and \cite{MP local law}. Moreover, it is not clear to us what the best rate of convergence is under general linear codes in terms of dual distance. It might be interesting to find out a general sufficient condition on the dual distance that guarantees the optimal rate of convergence $n^{-1}$. We hope to stress this problem in the future.






The paper is now organized as follows. In Section \ref{pre}, \textbf{Preliminaries} we introduce the main tool, the Stieltjes transform and related formulas and lemmas which will play important roles in the Proof of Theorem \ref{thm}. In Section \ref{proof}, we show how Theorem \ref{thm} can be derived directly from a major statement in terms of the Stieltjes transform (Theorem \ref{thm2}). While the argument is standard, it is quite technical and non-trivial. To streamline the idea of the proof, we put some of the arguments in Section \ref{Appen} \textbf{Appendix}. In Section \ref{proof2}, we give a detailed proof of Theorem \ref{thm2}. 

\section{Preliminaries}\label{pre}



\subsection{Stieltjes Transform}
In this section we recall some basic knowledge of Stieltjes transform. Interested readers may refer to \cite[Chapter B.2]{SA} for more details.

Let $F$ be an arbitrary real function with bounded variation, and $\mu$ be the corresponding (signed) measure. The Stieltjes transform of $F$ (or $\mu$) is defined by
$$s(z):=\int_{-\infty}^\infty \frac{\d F(x)}{x-z}=\int_{-\infty}^\infty \frac{\mu(\d x)}{x-z},$$
where $z$ is a complex variable outside the support of $F$ (or $\mu$). In particular, $s(z)$ is well-defined for all $z \in \mathbb{C}^+:=\{z \in \mathbb{C}: \Im z > 0\}$, the upper half complex plane. Here $\Im z$ is the imaginary part of $z$.

It can be verified that $s(z) \in \mathbb{C}^+$ for all $z \in \mathbb{C}^+$. The complex variable $z$ is commonly written as $z=E+\i\eta$ for $E,\eta \in \mathbb{R}$.

The Stieltjes transform is useful because a function of bounded variation (or signed measures) can be recovered from its Stieltjes transform via the inverse formula (\cite{ConI, Asym}):
$$\mu((x_1,x_2])=F(x_2)-F(x_1)=\lim_{\eta \downarrow 0}\frac{1}{\pi}\int_{x_1}^{x_2} \Im(s(E+\i\eta))\d E.$$
Here $\eta \downarrow 0$ means that the real number $\eta$ approaches zero from the right. Moreover, unlike the method of moments, the convergence of Stieltjes transform is both necessary and sufficient for the convergence of the underlying distribution (see \cite[Theorem B.9]{SA}).

\subsection{Resolvent Identities and Formulas for Green function entries}


Let $X=(X_{jk})$ be a $p \times n$ matrix. Denote by $G$ the Green function of $XX^*$, that is,
$$G:=G(z)=(XX^*-zI)^{-1},$$
where $z \in \mathbb{C}^+$ and $I$ is the identity matrix.

Given a subset $T \subset [1\isep p]:=\{1,2,\cdots,p\}$, let $X^{(T)}$ be the $p \times n$ matrix whose $(j,k)$-th entry is defined by $(X^{(T)})_{jk}:=\mathbbm{1}_{j \notin T}X_{jk}$. In addition, let $G^{(T)}$ be the Green function of $X^{(T)}X^{(T)*}$. We write $\R$ and $\R^{(T)}$ as the Green functions of $X^*X$ and $X^{(T)*}X^{(T)}$ respectively. Then for $\l \in [1\isep p] \setminus T$, we have \cite[(3.8)]{MP local law}
\begin{equation}\label{diagonal}
\frac{1}{G_{\l\l}^{(T)}}=-z-z\sum_{j,k}X_{\l j}\R_{jk}^{(T\l)}\overline{X}_{\l k},
\end{equation}
where the indices $j,k$ vary in $[1\isep n]$, and $\R_{jk}^{(T\l)}$ is the $(j,k)$-th entry of the matrix $\R^{(T \cup \{\l\})}$.

The two Green functions $G^{(T)}$ and $\mathcal{R}^{(T)}$ are related by the following identity (\cite[Lemma 3.9]{MP local law}):
\begin{equation}\label{Tr GR}
\Tr G^{(T)}-\Tr \R^{(T)}=\frac{n-(p-|T|)}{z}.
\end{equation}
Here $|T|$ is the cardinality of the set $T$, and $\Tr A$ is the trace of the matrix $A$.

Recall that we denote $\eta=\Im z$. Then we have the following eigenvalue interlacing property (\cite[Lemma 3.10]{MP local law})
\begin{equation}\label{Interlacing}
|\Tr G^{(T)}-\Tr G| \leq C\eta^{-1},
\end{equation}
where $C$ is a constant depending on the set $T$ only, and also the Wald's identity (see \cite[(3.14)]{MP local law} or \cite[(3.6)]{Local law})
\begin{equation}\label{Wald}
\sum_k |\R_{jk}^{(T)}|^2=\eta^{-1}\Im \R_{jj}^{(T)}.
\end{equation}

\subsection{Stieltjes Transform of the Marchenko-Pastur Law}
The Stieltjes transform $s_{\MP}$ of the Marchenko-Pastur distribution given in (\ref{mppdf}) can be computed as (see \cite{ConII})
\begin{equation}\label{SMP}
s_{\MP}(z)=-\frac{y+z-1 - \sqrt{(y+z-1)^2-4yz}}{2yz}.
\end{equation}
It is well-known that $s_{\MP}(z)$ is the unique function that satisfies the equation of $u(z)$ in
\begin{equation}\label{SMP2}
u(z)=\frac{1}{1-y-z-yzu(z)}
\end{equation}
such that $\Im u(z) > 0$ whenever $\eta:=\Im z > 0$.

If a function $f: \mathbb{C}^+ \to \mathbb{C}^+$ satisfies Equation \ref{SMP2} with a small perturbation, we then expect that $f(z)$ should be quite close to $s_{\MP}(z)$ as well. This is quantified by the following result. First, we define
\begin{equation}\label{kappa}
\kappa:=\min\{|E-a|,|E-b|\}
\end{equation}
where $a$ and $b$ are constants given in (\ref{ab}) and for a fixed constant $\tau > 0$, we define
\begin{equation}\label{st}
\st:=\bigg\{z=E+\i\eta: \kappa \leq \tau^{-1}, n^{-1/4+\tau} \leq \eta \leq \tau^{-1}
\bigg\}.
\end{equation}
\begin{lemma}\cite[Lemma 4.5]{MP local law}\label{diff}
Suppose the function $\delta: \st \to (0,\infty)$ satisfies:
\begin{enumerate}
\item $n^{-2} \leq \delta(z) \leq \ep$ for some fixed constant $\ep > 0$ for all $z \in \st$;
\item $\delta$ is Lipschitz continuous with Lipschitz constant $n$;
\item for each fixed $E$, the function $\eta \mapsto \delta(E+\i\eta)$ is nonincreasing for $\eta > 0$.
\end{enumerate}
Suppose $u: \st \to \mathbb{C}$ is the Stieltjes transform of a probability measure satisfying
\begin{equation}\label{perturb}
u(z)=\frac{1}{1-y-z-yzu(z)+\Delta(z)}
\end{equation}
for some $\Delta(z)$.

Fix $z \in \st$ and define $L(z):=\left\{w \in \st: \Re w=\Re z, \Im w \in [\Im z,1]\cap (n^{-5}\mathbb{N}) \right\}$, where $\Re z$ is the real part of $z$. Suppose that
\begin{equation}\label{Delta}
|\Delta(w)| \leq \delta(w), \quad \forall \, w \in L(z) \cup \{z\}.
\end{equation}
Then we have
$$|u(z)-s_{\MP}(z)| \leq \frac{C\delta(z)}{\sqrt{\kappa+\eta+\delta(z)}},$$
where $\kappa$ is the $z$-dependent variable defined as in (\ref{kappa}).
\end{lemma}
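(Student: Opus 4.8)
The plan is to convert the perturbed self-consistent equation (\ref{perturb}) into a quadratic equation for the deviation $v(z):=u(z)-s_{\MP}(z)$, read off the size of its small root, and then confirm --- by a continuity argument down the discretized vertical segment $L(z)$ --- that $v$ is indeed the small root. Writing $m:=s_{\MP}$ and $u=m+v$, I would subtract (\ref{SMP2}) from (\ref{perturb}); both $1/u=1-y-z-yzu+\Delta$ and $1/m=1-y-z-yzm$ are nonzero on $\st$, so after clearing denominators and using $m(1-y-z)-yzm^2=1$ to eliminate the linear-in-$z$ terms one obtains
\[
yz\,m\,v^2-\big(\alpha(z)+m\Delta\big)\,v-m^2\Delta=0,\qquad \alpha(z):=1-yz\,m(z)^2 .
\]
When $\Delta\equiv 0$ the roots are $v=0$ and $v=\alpha/(yzm)$, so the physically relevant solution is the branch continuous in $\Delta$ that vanishes as $\Delta\to 0$.

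The second ingredient is a package of quantitative facts about the MP law on $\st$. Since $y\in(0,1)$ the law is supported on $[a,b]$ with $a=(1-\sqrt y)^2>0$, hence $m$, $m^{-1}$, $yz$ and $(yz)^{-1}$ are all bounded above and below by constants depending only on $y$ and $\tau$; and, crucially, the stability coefficient exhibits square-root edge behaviour,
\[
\alpha(z)=\pm\,m(z)\,\sqrt{(1-y-z)^2-4yz},\qquad (1-y-z)^2-4yz=(z-a)(z-b),
\]
so that $|\alpha(z)|\asymp\sqrt{\kappa+\eta}$ uniformly on $\st$. (The formula for $\alpha$ comes from differentiating the defining polynomial $yzm^2-(1-y-z)m+1=0$; the factorization is an elementary computation.) These are classical for the MP law and I would cite them from \cite{ConII,MP local law} or record them in a short preliminary lemma.

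With $|\alpha|^2\asymp\kappa+\eta$ established, estimating the small root is a two-regime computation using the discriminant $(\alpha+m\Delta)^2+4yz\,m^3\Delta$. If $\delta(z)\lesssim\kappa+\eta$, the $(\alpha+m\Delta)^2$ term dominates, the square root expands around $\alpha$, and the small root satisfies $|v|\lesssim|\Delta|/|\alpha|\lesssim\delta/\sqrt{\kappa+\eta}\asymp\delta/\sqrt{\kappa+\eta+\delta}$. If $\delta(z)\gtrsim\kappa+\eta$, the term $4yz\,m^3\Delta$ may dominate, both roots have size $\asymp\sqrt{|\Delta|}$, and $|v|\lesssim\sqrt{\delta}\asymp\delta/\sqrt{\kappa+\eta+\delta}$. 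Either way one gets $|u(z)-s_{\MP}(z)|\le C\delta(z)/\sqrt{\kappa+\eta+\delta(z)}$, provided $v$ is the small root.

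The heart of the matter --- and the step I expect to be the main obstacle --- is justifying that branch selection, and this is exactly where conditions (1)--(3) on $\delta$ and the $n^{-5}\mathbb N$-grid $L(z)$ enter. At the top $\Im w=1$ of $L(z)$ we are far from the edge, so $|\alpha|\gtrsim 1$, (\ref{SMP2}) is stable with an $O(1)$ constant, and since $u$ is the Stieltjes transform of a probability measure ($|u|\le 1/\Im w$) one gets $|v(w)|=O(\delta(w))=O(\ep)$: $v$ starts in the small-root basin. Along $L(z)$ the two roots are separated by $\asymp|\sqrt{(\alpha+m\Delta)^2+4yz\,m^3\Delta}|\gtrsim\sqrt{\kappa+\eta}\ge\sqrt{\eta}\ge n^{-1/8+\tau/2}$ on $\st$ whenever the large root is genuinely large (the regime $\delta\lesssim\kappa+\eta$); meanwhile $u$ and $s_{\MP}$ are Stieltjes transforms, so $|u'|,|s_{\MP}'|\lesssim\eta^{-2}\le n$ on $\st$, and thus $v$ moves by at most $\lesssim n\cdot n^{-5}=n^{-4}$ between consecutive grid points of $L(z)$ --- far less than the gap. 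A bootstrap along $L(z)$ from $\Im w=1$ down to $\Im w=\eta$, using the Lipschitz bound (2) to keep the fluctuation of $\delta$ (hence of the admissible size $\delta/\sqrt{\kappa+\eta+\delta}$) controlled and the monotonicity (3) so that this size behaves well as $\eta$ decreases, then forces $v$ to remain the small root all the way to $z$; combined with the small-root estimate of the previous paragraph this would complete the proof. The delicate point is making this bootstrap airtight across the crossover $\delta\asymp\kappa+\eta$, where the two roots are no longer well separated but both satisfy the target bound, and checking that the grid spacing $n^{-5}$ and Lipschitz constant $n$ in the hypotheses are precisely what the discrete interpolation needs.
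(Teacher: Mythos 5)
The paper itself does not prove this lemma: it is quoted verbatim from \cite[Lemma~4.5]{MP local law} and used as a black box in Section~\ref{sec}, so there is no ``paper's own proof'' to compare against. What can be assessed is whether your reconstruction is sound, and whether it matches the strategy of the cited reference --- and it does. Subtracting the two self-consistent equations and clearing denominators gives exactly your quadratic $yzm\,v^2-(\alpha+m\Delta)v-m^2\Delta=0$ with $\alpha=1-yzm^2$; the identity $\alpha=\mp m\sqrt{(1-y-z)^2-4yz}$ together with $(1-y-z)^2-4yz=(z-a)(z-b)$ yields the square-root stability $|\alpha|\asymp\sqrt{\kappa+\eta}$; the two-regime analysis of the discriminant $(\alpha+m\Delta)^2+4yzm^3\Delta$ gives the claimed bound for whichever root $v$ is; and the branch is fixed by a discrete continuity argument along $L(z)$ using $|u'|,|s_{\MP}'|\le\eta^{-2}\le n$ against the $n^{-5}$ spacing. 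Your algebra and numerology are all correct. This is precisely the dichotomy-plus-bootstrap mechanism of the cited lemma.

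The one thing you should not leave at the hand-waving level is exactly the step you flag. At the top $\Im w=1$ the a-priori Stieltjes-transform bound $|v|\le|u|+|m|\le 2$ does not by itself exclude the large root, whose modulus $\asymp|\alpha|/|yzm|$ is only $\Theta(1)$ there; the initialization genuinely needs the hypothesis $\delta\le\ep$ with $\ep$ a sufficiently small (but fixed) constant so that the small-root basin is unambiguous at $\eta=1$. And across the crossover $\delta\asymp\kappa+\eta$ the two roots are not well separated, so the bootstrap cannot be phrased as ``$v$ stays near the small root''; it must be phrased as ``$v$ stays within the envelope $C\delta/\sqrt{\kappa+\eta+\delta}$,'' which both roots satisfy in the degenerate regime. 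Conditions (2) and (3) on $\delta$ are what prevent this envelope from jumping between adjacent grid points of $L(z)$ --- the Lipschitz bound controls the fluctuation of $\delta$ over a step of size $n^{-5}$, and the monotonicity in $\eta$ makes the envelope behave predictably as one descends. Written out in full, this is essentially a reproduction of the proof of \cite[Lemma~4.5]{MP local law}; your outline is a correct compression of it.
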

\subsection{Convergence of Stieltjes Transform in Probability}
The following result is useful to bound the convergence rate of a random Stieltjes transform in probability.
\begin{lemma}\label{deviation}
Let $\M$ be a $p \times n$ random matrix with independent rows, $S=\M\M^*$, and $m(z)$ be the Stieltjes transform of the empirical spectral distribution of $S$. Then
$$\P\left(|m(z)-\E m(z)| \geq r \right) \leq 2\exp\left(-\frac{p\eta^2r^2}{8}\right).$$
\end{lemma}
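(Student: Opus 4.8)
The plan is to prove Lemma~\ref{deviation} by a standard martingale concentration argument. The only randomness lies in the $p$ independent rows of $\M$; revealing them one at a time writes $s(z)-\E s(z)$ as a sum of bounded martingale differences, to which the Azuma--Hoeffding inequality (equivalently McDiarmid's bounded differences inequality) applies directly. No moment hypotheses on the rows are required, because $s(z)$ and every quantity used below is deterministically bounded in terms of $\eta=\Im z$. Concretely, fix $z=E+\i\eta$ with $\eta>0$, write $R_1,\dots,R_p$ for the rows of $\M$, put $\mathcal F_i:=\sigma(R_1,\dots,R_i)$ for $0\le i\le p$ (so $\mathcal F_0$ is trivial and $\mathcal F_p$ generates the whole space), and regard $s(z)=\tfrac1n\Tr(S-zI)^{-1}$. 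Then $s(z)-\E s(z)=\sum_{i=1}^{p}D_i$ with $D_i:=\E[\,s(z)\mid\mathcal F_i\,]-\E[\,s(z)\mid\mathcal F_{i-1}\,]$ is a martingale difference decomposition.

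The one substantive step is to bound $|D_i|$ uniformly. Let $\M^{(i)}$ denote $\M$ with its $i$-th row set to zero and $s^{(i)}(z):=\tfrac1n\Tr(\M^{(i)}\M^{(i)*}-zI)^{-1}$ the associated Stieltjes transform. Since $s^{(i)}$ does not depend on $R_i$, we have $\E[\,s^{(i)}\mid\mathcal F_i\,]=\E[\,s^{(i)}\mid\mathcal F_{i-1}\,]$, hence $D_i=\E[\,s-s^{(i)}\mid\mathcal F_i\,]-\E[\,s-s^{(i)}\mid\mathcal F_{i-1}\,]$. Now $S=\M\M^*$ and $\M^{(i)}\M^{(i)*}$ differ by a Hermitian matrix supported only on the $i$-th row and column, hence of rank at most $2$, and the standard interlacing/resolvent estimate for rank-$k$ Hermitian perturbations gives $\bigl|\Tr(S-zI)^{-1}-\Tr(\M^{(i)}\M^{(i)*}-zI)^{-1}\bigr|\le c/\eta$ for an absolute constant $c$. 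Dividing by $n$ yields $|s-s^{(i)}|\le c/(n\eta)$, so $|D_i|\le 2c/(n\eta)$ almost surely for every $i$.

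Feeding this into Azuma--Hoeffding, $\sum_{i=1}^{p}\lVert D_i\rVert_\infty^2\le p\bigl(2c/(n\eta)\bigr)^2=4c^2p/(n^2\eta^2)$, and therefore
\[
\P\bigl(|s(z)-\E s(z)|\ge r\bigr)\le 2\exp\!\Bigl(-\frac{r^2}{2\sum_{i}\lVert D_i\rVert_\infty^2}\Bigr)\le 2\exp\!\Bigl(-\frac{n^2\eta^2 r^2}{8c^2p}\Bigr),
\]
which is exactly the claimed bound once the sharp constant $c=1$ in the rank-$2$ resolvent estimate is used.

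I expect the only real obstacle to be that rank-perturbation estimate, i.e.\ verifying both that a single row change to $\M$ perturbs $S$ by a rank-$\le 2$ Hermitian matrix and that $\bigl|\Tr(A-zI)^{-1}-\Tr(B-zI)^{-1}\bigr|\le 2/\eta$ whenever $A-B$ is Hermitian of rank $\le 2$. The latter is immediate from either the resolvent identity $(A-zI)^{-1}-(B-zI)^{-1}=-(A-zI)^{-1}(A-B)(B-zI)^{-1}$ together with the spectral decomposition of $A-B$, or from eigenvalue interlacing (a rank-$k$ Hermitian perturbation moves the eigenvalue counting function by at most $k$ pointwise, and one integrates $|x-z|^{-2}$ against that difference). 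The martingale bookkeeping surrounding it is routine, so the full proof should be brief.
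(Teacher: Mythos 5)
Your approach is the same as the paper's: treat $s(z)$ as a function of the $p$ independent rows, observe that replacing one row perturbs $S=\M\M^*$ by a Hermitian matrix of rank at most two (supported on one row and column), deduce a uniform bounded-difference estimate from a rank-perturbation resolvent bound, and conclude by McDiarmid/Azuma. The only place you go astray is the constant bookkeeping. You route the martingale difference through the auxiliary $s^{(i)}$ (row $i$ zeroed out), picking up a factor $2$, and then assert ``the sharp constant $c=1$'' in $\bigl|\Tr(A-zI)^{-1}-\Tr(B-zI)^{-1}\bigr|\le c/\eta$ for rank-$2$ Hermitian $A-B$. That is not correct: the sharp constant for a rank-one Hermitian perturbation is $1/\eta$ (via Sherman--Morrison, using $|u^*(B-z)^{-2}u|\le \Im\bigl(u^*(B-z)^{-1}u\bigr)/\eta\le|1+u^*(B-z)^{-1}u|/\eta$), so rank two gives $c=2$, and this is attained in the limit of a large signature-$(1,1)$ perturbation; your interlacing variant gives the even weaker $c=2\pi$. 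With the correct $c$ your chain yields $2\exp\bigl(-n^2\eta^2 r^2/(32p)\bigr)$ rather than the stated $2\exp\bigl(-n^2\eta^2 r^2/(8p)\bigr)$. The paper instead bounds the bounded difference directly: $G-G^{(i)}$ has rank at most two and operator norm at most $2/\eta$, so $|\Tr(G-G^{(i)})|\le 4/\eta$, whence the single-row difference of $s$ is at most $4/(n\eta)$, and McDiarmid's inequality in the form $2\exp\bigl(-2r^2/\sum b_i^2\bigr)$ reproduces the stated constant exactly. None of this affects the application (any exponentially small tail in $n^{4\tau}$ suffices), but you should not claim $c=1$; either cite $c=2$ and accept a worse constant, or use the paper's direct $4/(n\eta)$ bounded-difference estimate together with the two-sided McDiarmid form to match the lemma.
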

\begin{proof}[Proof of Lemma \ref{deviation}]
Note that the $(j,k)$-th entry of $S$ is simply the inner product of the $j$-th and $k$-th rows of $\M$. Hence varying one row of $\M$ only gives an additive perturbation of $S$ of rank at most two. Applying the resolvent identity \cite[(2.3)]{Local law}, we see that the Green function is also only affected by an additive perturbation by a matrix of rank of at most two and operator norm at most $2\eta^{-1}$. Then the desired result follows directly by applying the McDiarmid's Lemma \cite[Lemma F.3]{Local law}.
	
\end{proof}
For the purpose of this paper, we define an $n$-dependent event $\Xi$ to hold \emph{with high probability} if for any $D>0$, there is a quantity $n_0=n_0(D)>0$ such that $\P(\Xi) \geq 1-n^{-D}$ for any $n > n_0$.

\section{Proof of Theorem \ref{thm}}\label{proof}

From this section onwards, let $\C$ be a linear code of length $n$ over $\F_q$ with dual distance $\db \geq 5$. Let $\psi: \F_q \to \mathbb{C}^\times$ be the standard additive character, extended to $\F_q^n$ component-wisely. Write $\D=\psi(\C)$.

Let $\Phi_n$ be a $p \times n$ random matrix whose rows are picked from $\D$ uniformly and independently. This makes $\D^p$ a probability space. Let $y:=p/n \in (0,1)$ be fixed. Write $X=n^{-1/2}\Phi_n$ and $\G=XX^*$ the Gram matrix of $X$. Furthermore, let $\mu_n$ be the empirical spectral measure of $\G$ given by (\ref{mun}).

Denote $s_{\G}(z)$ to be the Stieltjes transform of $\mu_n$, which is given by
$$s_{\G}(z)=\frac{1}{p}\sum_{j=1}^p \frac{1}{\lambda_j-z}=\frac{1}{p}\Tr G,$$
where $\lambda_1,\cdots,\lambda_p$ are the eigenvalues of the matrix $\G$, and $G$ is the Green function of $\G$, that is, $G:=G(z)=(\G-zI)^{-1}$. Note that in this setting this Stieltjes transform $s_{\G}(z)$ is itself a random variable.

Denote
\begin{equation}\label{EStietjes}
s_n(z):=\E s_{\G}(z)=\frac{1}{p} \, \E \Tr G.
\end{equation}
Here $\E$ is the expectation with respect to the probability space $\D^p$.


\subsection{An equation for $s_n(z)$}\label{esn}
In the following result, we write $s_n(z)$ defined in (\ref{EStietjes}) in the form of the equation (\ref{SMP2}) with a small perturbation.

\begin{theorem}\label{thm2}
For any $z \in \st$,
$$s_n(z)=\frac{1}{1-y-z-yzs_n(z)+\Delta(z)}$$
where $\Delta(z)=O(n^{-1}\eta^{-3})$.
\end{theorem}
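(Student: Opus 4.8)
The strategy is to derive a self-consistent equation for $s_n(z) = \frac{1}{p}\E\tr G$ by expanding the diagonal entries of the Green function via the resolvent identity \eqref{diagonal}, and then to control the fluctuations of the resulting random quantities using the structure coming from the dual distance condition $\db \geq 5$. Concretely, I would start from
$$\frac{1}{G_{\l\l}} = -z - z\sum_{j,k} X_{\l j}\R_{jk}^{(\l)}\overline{X}_{\l k},$$
and analyze the quadratic form $Q_\l := \sum_{j,k} X_{\l j}\R_{jk}^{(\l)}\overline{X}_{\l k}$. Writing $X_{\l j} = n^{-1/2}\psi(c_j)$ for a uniformly random codeword $c = (c_1,\dots,c_n) \in \C$, the key observation is that $\E[\psi(c_j)\overline{\psi(c_k)}] = \mathbbm{1}_{j=k}$ precisely because the dual distance is at least $2$ (so no single coordinate-difference lies in $\Cb$ except the trivial one); and more importantly, higher moments like $\E[\psi(c_{j_1})\overline{\psi(c_{j_2})}\psi(c_{j_3})\overline{\psi(c_{j_4})}]$ vanish unless the indices pair up, which is exactly where $\db \geq 5$ (controlling configurations of up to $4$ coordinates) enters. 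Thus in expectation $\E Q_\l \approx \frac{1}{n}\sum_j \R_{jj}^{(\l)} = \frac{1}{n}\tr\R^{(\l)}$, and by \eqref{Tr GR} together with \eqref{Interlacing}, $\frac{1}{n}\tr\R^{(\l)}$ is close to $\frac{1}{n}\tr\R = \frac{p}{n}s_n - \frac{n-p}{nz} = y\,s_n - \frac{1-y}{z}$ up to an error of order $\eta^{-1}/n$.

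Substituting this back, one gets $\E\left[\frac{1}{G_{\l\l}}\right] \approx -z - z\left(y\,s_n - \frac{1-y}{z}\right) = -z - zy\,s_n + (1-y)$, i.e. $\frac{1}{G_{\l\l}} \approx 1 - y - z - yz\,s_n$. Summing over $\l$ and using that $s_n = \frac{1}{p}\sum_\l \E G_{\l\l}$, one must pass from the average of the reciprocals to the reciprocal of the average, which is where one needs to show that $G_{\l\l}$ concentrates around $s_n$ and that the denominator $1 - y - z - yz\,s_n$ stays bounded away from zero on $\st$ (this is a standard fact about the Marchenko-Pastur equation, following from the lower bound on $\Im s_n \geq \eta / (\text{something bounded})$, or can be extracted from properties of $s_{\MP}$). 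Collecting all the error terms — the variance of $Q_\l$, the interlacing error, and the linearization error from inverting — should yield a perturbation $\Delta(z) = O(n^{-1}\eta^{-3})$; the three powers of $\eta^{-1}$ are natural, coming from one factor of $\eta^{-1}$ in the trace-difference/interlacing estimates and up to two more from bounding the variance of the quadratic form $Q_\l$ via Wald's identity \eqref{Wald}, which produces $\eta^{-1}\Im\R_{jj}^{(\l)}$ terms.

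The main obstacle is precisely the estimation of the variance of $Q_\l$ (and more generally showing $|Q_\l - \E Q_\l| \prec n^{-1/2}\eta^{-3/2}$ or the like). Unlike the i.i.d.\ case where the entries $X_{\l j}$ are independent across $j$, here the entries within a row are the coordinates of a single random codeword and are therefore highly dependent; independence only holds \emph{across rows}. So the concentration of $Q_\l$ must be proved using the code structure rather than entrywise independence: one expands $\E|Q_\l|^2 = \E\big|\sum_{j,k}\psi(c_j)\overline{\psi(c_k)}\R_{jk}^{(\l)}\big|^2/n^2$ into a sum over four indices $j,k,j',k'$, and uses the dual distance $\db \geq 5$ to argue that the character sum $\E[\psi(c_j)\overline{\psi(c_k)}\overline{\psi(c_{j'})}\psi(c_{k'})]$ vanishes unless the multiset $\{j,k'\}$ equals $\{k,j'\}$ (or finer degeneracies occur), which collapses the four-fold sum down to diagonal-type terms that can be bounded using $\sum_k |\R_{jk}^{(\l)}|^2 = \eta^{-1}\Im\R_{jj}^{(\l)} \leq \eta^{-2}$ and $|\R_{jj}^{(\l)}| \leq \eta^{-1}$. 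This step is the technical heart: one has to carefully enumerate which index configurations survive the character-sum cancellation, handle the off-diagonal $\R$-entries, and keep track of how many factors of $\eta^{-1}$ accumulate, then combine with \eqref{Interlacing} to replace $\R^{(\l)}$-quantities with $\R$-quantities. Once the variance bound is in hand, the remaining steps (inverting, summing over $\l$, verifying the denominator is bounded below, and checking $\Delta = O(n^{-1}\eta^{-3})$) are routine, if tedious, manipulations. Parts of this bookkeeping are deferred to Section~\ref{proof2}.
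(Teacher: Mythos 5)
Your proposal follows essentially the same route as the paper: expand $1/G_{\l\l}$ via the resolvent identity, control the off-diagonal quadratic form using $\db \geq 5$ together with Wald's identity, replace $\Tr\R^{(\l)}$ by $\Tr G$ via the trace relation and interlacing, and then carefully linearize the reciprocal when summing over $\l$. You identify all the key ingredients the paper uses, in particular the central point that row-wise dependence is tamed by the fourth-moment cancellation forced by the dual-distance condition.
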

We remark that Theorem \ref{thm2} is a major technical result regarding the expected Stieltjes transform $s_n(z)$, from which Theorem \ref{thm} can be derived directly without reference to linear codes at all. The proof of Theorem \ref{thm2} is, however, quite complicated and is directly related to properties of linear codes. To streamline the idea of the proof, here we assume Theorem \ref{thm2} and sketch a proof of Theorem \ref{thm}. The proof of Theorem \ref{thm2} is postponed to Section \ref{proof2}.
\subsection{Proof of Theorem \ref{thm}}\label{sec}
Assuming Theorem \ref{thm2}, we can first estimate the term $|s_{\G}(z)-s_{\MP}(z)|$, following ideas from \cite{Local law} and \cite{MP local law}.

\begin{theorem}\label{Stieltjes}
Assume that Theorem \ref{thm2} holds. Then for any fixed $z \in \st$, we have
$$|s_{\G}(z)-s_{\MP}(z)| \leq n^\tau(n^{-1/4}+n^{-1}\eta^{-7/2})$$ with high probability.
\end{theorem}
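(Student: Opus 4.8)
The plan is to combine three ingredients: Theorem \ref{thm2}, which says $s_n(z)$ satisfies the Marchenko--Pastur fixed-point equation up to an error $\Delta(z)=O(n^{-1}\eta^{-3})$; Lemma \ref{diff}, which converts such an approximate fixed-point relation into a quantitative bound $|s_n(z)-s_{\MP}(z)|\le C\delta(z)/\sqrt{\kappa+\eta+\delta(z)}$; and Lemma \ref{deviation}, which controls the fluctuation $|s_{\G}(z)-s_n(z)|$ around its mean. The triangle inequality $|s_{\G}(z)-s_{\MP}(z)| \le |s_{\G}(z)-s_n(z)| + |s_n(z)-s_{\MP}(z)|$ then reduces the theorem to bounding each of these two pieces by $n^\tau(n^{-1/4}+n^{-1}\eta^{-7/2})$ with high probability.

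For the deterministic piece, I would apply Lemma \ref{diff} with $u(z)=s_n(z)$, $\Delta(z)$ as in Theorem \ref{thm2}, and $\delta(z)$ chosen to dominate $|\Delta(w)|$ over the relevant set $L(z)\cup\{z\}$; since $\Delta(w)=O(n^{-1}\eta^{-3})$ and $\eta$ ranges over $[\Im z,1]$, one can take $\delta(z)=C n^{-1}(\Im z)^{-3}$ (possibly truncated from below at $n^{-2}$ to satisfy hypothesis (1), and this is $\le\ep$ on $\st$ because $\eta\ge n^{-1/4+\tau}$). One must check the three regularity conditions in Lemma \ref{diff}: the lower/upper bounds on $\delta$, Lipschitz continuity with constant $n$ (straightforward from differentiating $\eta^{-3}$ against $\eta\ge n^{-1/4+\tau}$), and monotonicity in $\eta$ (immediate). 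The lemma then gives $|s_n(z)-s_{\MP}(z)| \le C\delta(z)/\sqrt{\kappa+\eta+\delta(z)} \le C\delta(z)/\sqrt{\eta} = C n^{-1}\eta^{-7/2}$, which is comfortably absorbed into the claimed bound.

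For the random piece, Lemma \ref{deviation} applied with $\M = X = n^{-1/2}\Phi_n$ (whose rows are independent) gives $\P(|s_{\G}(z)-s_n(z)| \ge r) \le 2\exp(-n^2\eta^2 r^2/(8p))$. With $y=p/n$ fixed this is $2\exp(-c n\eta^2 r^2)$. Choosing $r = n^{\tau - 1/4}$ and using $\eta \ge n^{-1/4+\tau}$ on $\st$ gives an exponent of order $-c\, n\cdot n^{-1/2+2\tau}\cdot n^{2\tau-1/2} = -c\, n^{4\tau}$, which decays faster than any polynomial; hence $|s_{\G}(z)-s_n(z)| \le n^{\tau-1/4} \le n^\tau n^{-1/4}$ with high probability. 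Combining the two bounds yields the theorem.

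The main obstacle is the bookkeeping around Lemma \ref{diff}: one has to verify that the chosen $\delta$ genuinely satisfies all three structural hypotheses (in particular the lower bound $\delta\ge n^{-2}$ and the Lipschitz constant $n$), and that the hypothesis \eqref{Delta} holds not just at $z$ but over the entire discretized vertical segment $L(z)$ — this is why $\delta$ must be defined as a decreasing function of $\eta$ that dominates $\Delta$ uniformly on that segment, rather than just $|\Delta(z)|$ pointwise. Everything else is routine substitution of the bound $\eta\ge n^{-1/4+\tau}$ into the two estimates and a union-bound-free high-probability statement since we work at a fixed $z$.
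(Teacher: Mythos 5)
Your proposal is correct and follows essentially the same route as the paper: triangle inequality to split into $|s_n(z)-s_{\MP}(z)|$ and $|s_{\G}(z)-s_n(z)|$, apply Lemma \ref{diff} with $\delta(z)=Cn^{-1}\eta^{-3}$ (the paper also notes the Lipschitz constant is $Cn^{-4\tau}<n$) to get the first term of order $n^{-1}\eta^{-7/2}$, and apply Lemma \ref{deviation} with $r=n^{\tau-1/4}$ to get the second with exponent $-cn^{4\tau}$. The only cosmetic difference is your caution about truncating $\delta$ below at $n^{-2}$, which turns out to be unnecessary since on $\st$ we have $\eta\leq\tau^{-1}$ and hence $\delta(z)\geq C\tau^3 n^{-1}\geq n^{-2}$ automatically.
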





\begin{proof}[Proof of Theorem \ref{Stieltjes}]
We can check that all the conditions of Lemma \ref{diff} are satisfied: first by Theorem \ref{thm2} we see that (\ref{perturb}) holds for $u(z)=s_n(z)$; in addition, (\ref{Delta}) holds for $\delta(z)=\delta(E+\i\eta)=Cn^{-1}\eta^{-3}$, and this function is independent of $E$, nonincreasing in $\eta>0$ and Lipschitz continuous with Lipschitz constant $Cn^{-4\tau} < n$. Hence by Lemma \ref{diff}, we have
$$|s_n(z)-s_{\MP}(z)| \leq \frac{C\delta(z)}{\sqrt{\kappa+\eta+\delta(z)}}.$$
Note that in $\st$ we have $\delta(z)=O(n^{-1/4-3\tau})=o(\eta)$. Therefore we have	 \begin{equation}\label{EStieltjes}
|s_n(z)-s_{\MP}(z)|=O(n^{-1}\eta^{-7/2})
\end{equation}
for all $z \in \st$.

Now Lemma \ref{deviation} implies that
$$\P \left(|s_{\G}(z)-s_n(z)| > n^{\tau-1/4} \right) \leq 2\exp\left(-\frac{yn(n^{\tau-1/4})^4}{8}\right)= 2\exp\left(-\frac{yn^{4\tau}}{8}\right) \leq n^{-D}$$
on $\st$, for any $D > 0$ and large enough $n$. Combining this with (\ref{EStieltjes}) completes the proof of Theorem \ref{Stieltjes}.
\end{proof}

Finally, armed with Theorem \ref{Stieltjes}, we can derive Theorem \ref{thm} from a standard application of the Helffer-Sj\"{o}strand formula in random matrix theory. The argument is essentially complex analysis. Interested readers may refer to Section \ref{Appen} \textbf{Appendix} for details.

\section{Proof of Theorem \ref{thm2}}\label{proof2}

Now we give a detailed proof of Theorem \ref{thm2}, in which the condition that $\db \geq 5$ becomes essential.

\subsection{Linear codes with $\db \geq 5$}
Recall the notation from the beginning of Section \ref{proof}. Let $\C$ be a linear code of length $n$ over $\F_q$. First is a simple orthogonality result regarding $\C$.
\begin{lemma}\label{lem}
Let $\mathbf{a} \in \F_q^n$. Then
$$\frac{1}{\#\C}\sum_{\mathbf{c} \in \C}\psi(\mathbf{a}\cdot \mathbf{c})=\begin{cases}
1 &(\mathbf{a} \in \Cb),\\
0 &(\mathbf{a} \notin \Cb).
\end{cases}$$
Here $\mathbf{a}\cdot\mathbf{c}$ is the usual inner product between the vectors $\mathbf{a}$ and $\mathbf{c}$.
\end{lemma}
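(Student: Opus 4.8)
The plan is to treat the two cases separately; the substance is a single application of orthogonality of characters for the abelian group $(\C,+)$.

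First I would dispose of the case $\mathbf{a}\in\Cb$. By the definition of the dual code, $\mathbf{a}\cdot\mathbf{c}=0$ for every $\mathbf{c}\in\C$, so each summand is $\psi(0)=1$ and the normalized sum equals $1$ on the nose.

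For $\mathbf{a}\notin\Cb$, I would use the fact that $\mathbf{c}\mapsto\psi(\mathbf{a}\cdot\mathbf{c})$ is a character of the additive group $\C$: it is multiplicative in $\mathbf{c}$ because $\psi$ is additive and the inner product is bilinear. The crucial point is that this character is nontrivial. Indeed, $\mathbf{c}\mapsto\mathbf{a}\cdot\mathbf{c}$ is an $\F_q$-linear map $\C\to\F_q$ which is not identically zero since $\mathbf{a}\notin\Cb$, hence its image is a nonzero $\F_q$-subspace of the one-dimensional space $\F_q$, i.e.\ all of $\F_q$; since $\psi$ is a nontrivial additive character of $\F_q$, there is $\gamma\in\F_q$ with $\psi(\gamma)\neq 1$, and by surjectivity some $\mathbf{c}_0\in\C$ with $\mathbf{a}\cdot\mathbf{c}_0=\gamma$. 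Setting $S:=\sum_{\mathbf{c}\in\C}\psi(\mathbf{a}\cdot\mathbf{c})$ and reindexing by the bijection $\mathbf{c}\mapsto\mathbf{c}+\mathbf{c}_0$ of $\C$ gives $S=\psi(\gamma)\,S$, so $(1-\psi(\gamma))S=0$ and therefore $S=0$, which is the claim. Equivalently, one may partition $\C$ into the $q$ fibers of the surjective functional $\mathbf{c}\mapsto\mathbf{a}\cdot\mathbf{c}$, each of size $\#\C/q$, and conclude $S=(\#\C/q)\sum_{\beta\in\F_q}\psi(\beta)=0$.

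The only step requiring any care — the ``obstacle'', modest as it is — is the nontriviality assertion when $q$ is a proper prime power: then $\psi$ has nontrivial kernel (the trace-zero hyperplane of $\F_q$), so $\mathbf{a}\cdot\mathbf{c}\neq 0$ by itself does \emph{not} force $\psi(\mathbf{a}\cdot\mathbf{c})\neq 1$. It is precisely the surjectivity of the linear functional $\mathbf{c}\mapsto\mathbf{a}\cdot\mathbf{c}$ that supplies a codeword on which $\psi$ is nontrivial, and this is what makes the shift argument go through uniformly in $q$.
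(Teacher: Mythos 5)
Your proof is correct, and it takes a different route from the paper's. The paper chooses a generator matrix $H$ of $\C$, parameterizes codewords as $\mathbf{c}=\mathbf{x}H$ with $\mathbf{x}\in\F_q^k$, factors the sum into a product of $k$ one-variable character sums over $\F_q$, and applies the orthogonality relation $\sum_{z\in\F_q}\psi(zx)=q\cdot\mathbbm{1}_{x=0}$ to each factor; the condition $\mathbf{a}\in\Cb$ emerges as $\mathbf{a}H^T=\mathbf{0}$. You instead work intrinsically with $\C$ as a finite abelian group: $\mathbf{c}\mapsto\psi(\mathbf{a}\cdot\mathbf{c})$ is a character of $(\C,+)$, nontrivial precisely when $\mathbf{a}\notin\Cb$, and a nontrivial character sums to zero via the standard shift (or fiber) argument. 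The two approaches buy different things. The paper's coordinate computation is concrete and makes the appearance of $\Cb$ as the kernel of $H^T$ explicit, at the cost of fixing a basis. Your argument is basis-free, shorter, and generalizes verbatim to any finite abelian group with a nontrivial character. You also isolate the one genuine subtlety, which the paper handles only implicitly inside its orthogonality lemma: when $q$ is a proper prime power, $\psi$ has a nontrivial kernel (the trace-zero hyperplane), so $\mathbf{a}\cdot\mathbf{c}\neq 0$ alone does not give $\psi(\mathbf{a}\cdot\mathbf{c})\neq 1$; surjectivity of the nonzero $\F_q$-linear functional $\mathbf{c}\mapsto\mathbf{a}\cdot\mathbf{c}$ onto the one-dimensional space $\F_q$ is what rescues nontriviality of the composite character. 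That is exactly the right point to flag.
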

As in Section \ref{proof}, let $\Phi_n$ be a $p \times n$ random matrix whose rows are picked from $\D=\psi(\C)$ uniformly and independently and let $X=n^{-1/2}\Phi_n$. Denote by $X_{jk}$ the $(j,k)$-th entry of $X$.
\begin{corollary}\label{cor}
Assume $\db \geq 5$. Then for any $\l \in [1\isep p]$,

(a) $\E(X_{\l j}\overline{X}_{\l k})=0$ if $j \neq k$;

(b) $\E(X_{\l j}X_{\l t}\overline{X}_{\l k}\overline{X}_{\l s})=0$ if the indices $j,t,k,s$ do not come in pairs. If the indices come in pairs, then $|\E(X_{\l j}X_{\l t}\overline{X}_{\l k}\overline{X}_{\l s})| \leq n^{-2}$.

Here $\E$ is the expectation with respect to the probability space $\D^p$.
\end{corollary}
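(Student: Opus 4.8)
The plan is to reduce both statements to the orthogonality relation of Lemma \ref{lem}, which is the only place where the dual distance enters. Writing each row of $\Phi_n$ as $\psi(\mathbf{c})$ for a uniformly random $\mathbf{c}\in\C$, the entry $X_{\l j}$ equals $n^{-1/2}\psi(c_j)$ where $c_j$ is the $j$-th coordinate of the $\l$-th chosen codeword. Since $\psi$ is an additive character, a product such as $X_{\l j}\overline{X}_{\l k}$ becomes $n^{-1}\psi(c_j - c_k) = n^{-1}\psi(\mathbf{e}\cdot\mathbf{c})$ where $\mathbf{e} = \mathbf{e}_j - \mathbf{e}_k$ is the difference of two standard basis vectors, viewed over $\F_q$. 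Averaging over $\mathbf{c}\in\C$ and invoking Lemma \ref{lem}, the expectation is $1$ or $0$ according to whether $\mathbf{e}\in\Cb$. The Hamming weight of $\mathbf{e}_j-\mathbf{e}_k$ is $2$ when $j\neq k$, which is strictly less than $\db \geq 5$, so $\mathbf{e}\notin\Cb$ and the expectation vanishes; this gives part (a) (and the normalization $n^{-1}$ is absorbed since the statement only claims the value is $0$).

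For part (b) I would argue identically with the vector $\mathbf{e} = \mathbf{e}_j + \mathbf{e}_t - \mathbf{e}_k - \mathbf{e}_s$ over $\F_q$, so that $X_{\l j}X_{\l t}\overline{X}_{\l k}\overline{X}_{\l s} = n^{-2}\psi(\mathbf{e}\cdot\mathbf{c})$. The key combinatorial observation is a case analysis on how many of the four indices coincide. If the indices ``do not come in pairs'' — meaning we cannot partition $\{j,t\}$ and $\{k,s\}$ into matched equalities — then the multiset $\{j,t\}$ differs from the multiset $\{k,s\}$, and one checks that $\mathbf{e}$ then has Hamming weight in $\{1,2,3,4\}$: it cannot be the zero vector precisely because the positive-coefficient multiset and negative-coefficient multiset are unequal, and it involves at most $4$ coordinates. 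Since $\db \geq 5$ exceeds $4$, again $\mathbf{e}\notin\Cb$ and Lemma \ref{lem} forces the expectation to be $0$. If on the other hand the indices do come in pairs (either $j=k, t=s$ or $j=s, t=k$, or the degenerate case $j=t=k=s$), then $\mathbf{e}$ is the zero vector, so $\psi(\mathbf{e}\cdot\mathbf{c})=1$ identically and the expectation equals exactly $n^{-2}$; in all cases $|\E(X_{\l j}X_{\l t}\overline{X}_{\l k}\overline{X}_{\l s})| \leq n^{-2}$, as claimed.

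The one genuinely delicate point — and the step I expect to be the main obstacle to state cleanly rather than technically hard — is pinning down the precise meaning of ``come in pairs'' and verifying that every configuration \emph{not} of that form produces a nonzero $\mathbf{e}$ of weight at most $4$. One must be careful that partial coincidences (e.g. $j=t$ but $k,s$ distinct from each other and from $j$) are correctly classified as ``not in pairs'': there $\mathbf{e} = 2\mathbf{e}_j - \mathbf{e}_k - \mathbf{e}_s$, which is nonzero (assuming $q$ is not such that $2=0$; and if $q$ has characteristic $2$ it is still nonzero since the coordinates $k,s$ appear with coefficient $1$) and supported on at most $3$ positions. A short exhaustive check over the possible equality patterns among $j,t,k,s$ — there are only a handful up to the obvious symmetries $j\leftrightarrow t$, $k\leftrightarrow s$, and conjugation swapping the two pairs — settles this, and the bound $\db\geq 5 > 4$ does the rest. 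Part (a) is then just the two-index special case of the same reasoning.
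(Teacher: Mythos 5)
Your reduction to Lemma \ref{lem} via the vector $\mathbf{e}=\mathbf{e}_j+\mathbf{e}_t-\mathbf{e}_k-\mathbf{e}_s$ is exactly the paper's approach, and parts (a) and the trivial bound $|\E(\cdot)|\le n^{-2}$ in (b) are fine. But there is a genuine gap in your case analysis for the vanishing claim in (b), and it bites precisely in the most important case, $q=2$.

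You define ``come in pairs'' as $\{j,t\}=\{k,s\}$ (as multisets) and argue that when this fails, $\mathbf{e}\neq 0$ because ``the positive-coefficient multiset and negative-coefficient multiset are unequal.'' That reasoning is integer arithmetic; it does not survive reduction modulo the characteristic. Concretely, take $j=t$, $k=s$, $j\neq k$. Under your definition these indices do \emph{not} come in pairs (indeed $\{j,j\}\neq\{k,k\}$), yet $\mathbf{e}=2\mathbf{e}_j-2\mathbf{e}_k$, which is the zero vector in $\F_q$ whenever $q$ has characteristic $2$. In that case $X_{\l j}^2\overline{X}_{\l k}^2=n^{-2}\psi(2c_j-2c_k)=n^{-2}$ identically, so the expectation equals $n^{-2}\neq 0$, contradicting your conclusion that it vanishes. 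You did notice that characteristic $2$ needs care in the configuration $j=t$, $k\neq s$, but you stopped one step short: the configuration $j=t$, $k=s$ is the one that actually produces cancellation in characteristic $2$. The fix is to use the broader (and standard) notion that ``come in pairs'' means the multiset $\{j,t,k,s\}$ can be partitioned into two pairs of equal indices, i.e.\ one of $(j=k,\,t=s)$, $(j=s,\,t=k)$, or $(j=t,\,k=s)$ holds (including the all-equal case). With that definition, a short check over all remaining equality patterns (all distinct; exactly one coincidence; three equal and one different) shows $\mathbf{e}$ is supported on at most four coordinates and has at least one coefficient equal to $\pm 1$, hence is nonzero in every characteristic, so Lemma \ref{lem} and $\db\geq 5$ give $\E(\cdot)=0$ there. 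Part (a) and the rest of your argument are unaffected.
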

\begin{proof}[Proof of Corollary \ref{cor}]

For simplicity, denote by $\mathbf{e}_i=(0,\cdots,0,1,0,\cdots,0) \in \F_q^n$ the vector with a $1$ at the $i$-th entry and $0$ at all other places.

(a) It is easy to see that
\begin{eqnarray*} \E(X_{\l j}\overline{X}_{\l k})&=&n^{-1}(\#\C)^{-1}\sum_{\mathbf{c} \in \C}\psi(c_j-c_k)\\
 &=& n^{-1}(\#\C)^{-1} \sum_{\mathbf{c} \in \C}\psi \left((\mathbf{e}_j-\mathbf{e}_k) \cdot \mathbf{c} \right).
 \end{eqnarray*}
As $\db \geq 5$ and $j \ne k$, so $0 \ne \mathbf{e}_j-\mathbf{e}_k \notin \Cb$, and the desired result follows directly from Lemma \ref{lem}.

(b) Again we can check that
\[ \E(X_{\l j}X_{\l t}\overline{X}_{\l k}\overline{X}_{\l s})=n^{-2}(\#\C)^{-1} \sum_{\mathbf{c} \in \C}\psi\left((\mathbf{e}_j+\mathbf{e}_t-\mathbf{e}_k-\mathbf{e}_s) \cdot \mathbf{c} \right).\]
If the indices $j,t,k,s$ do not come in pairs, since $d^\bot \ge 5$, we have $0 \ne \mathbf{e}_j+\mathbf{e}_t-\mathbf{e}_k-\mathbf{e}_s \notin \Cb$, and the result is zero by Lemma \ref{lem}; If the indices $j,t,k,s$ do come in pairs, noting that $|X_{jk}|=n^{-1/2}$, we also obtain the desired estimate. This completes the proof of Corollary \ref{cor}.
\end{proof}

\subsection{Resolvent identities}
We start with the resolvent identity (\ref{diagonal}) for $T=\emptyset$. The sum on the right of (\ref{diagonal}) can be written as
$$z\sum_{j,k} X_{\l j}\R_{jk}^{(\l)}\overline{X}_{\l k}=\frac{z}{n}\sum_j \R_{jj}^{(\l)}+\Zl,$$
where
\begin{equation}\label{Zl}
\Zl=z\sum_{j \neq k}X_{\l j}\R_{jk}^{(\l)}\overline{X}_{\l k}.
\end{equation}
Using (\ref{diagonal}) and (\ref{Tr GR}) we have
\begin{align}
\frac{1}{G_{\l\l}}&=-z-\frac{z}{n}\Tr  \R^{(\l)}-\Zl\nonumber\\
&=-z-\frac{z}{n}\left(\Tr
\Gl-\frac{n-p+1}{z}\right)-\Zl\nonumber\\
&=1-y-z-yzs_n(z)+\Yl,\label{diagonal2}
\end{align}
where
\begin{align}
\Yl &= yzs_n(z)-\frac{z}{n}\Tr \Gl+\frac{1}{n}-\Zl \nonumber \\
&= \frac{z}{n}\left(\E \Tr G-\Tr \Gl \right)+\frac{1}{n}-\Zl. \label{Yl}
\end{align}
\subsection{Estimates of $\Zl$ and $\Yl$}
We now give estimates on the ($z$-dependent) random variable $\Zl$. First, given $T \subset [1\isep p]$, we denote $\E^{(T)}(\cdot):=\E(\cdot|X^{(T)})$.
\begin{lemma}\label{Zl2}
For any $\l \in [1\isep p]$, we have

(a) $\E^{(\l)}\Zl=\E\Zl=0$;

(b) $\E|\Zl|^2=O(n^{-1}\eta^{-2})$.
\end{lemma}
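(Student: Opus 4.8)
The plan is to exploit the conditional independence structure: $\R^{(\l)}$ depends only on the rows of $X$ other than the $\l$-th, so conditionally on $X^{(\l)}$ the entries $X_{\l j}$ are the only source of randomness in $\Zl$. First I would prove (a). Writing $\Zl = z\sum_{j\neq k} X_{\l j}\R_{jk}^{(\l)}\overline{X}_{\l k}$ and taking $\E^{(\l)}$, the matrix entries $\R_{jk}^{(\l)}$ pass through the conditional expectation, leaving $\E^{(\l)}\Zl = z\sum_{j\neq k}\R_{jk}^{(\l)}\,\E\bigl(X_{\l j}\overline{X}_{\l k}\bigr)$, where the last expectation no longer depends on the conditioning because the $\l$-th row is independent of $X^{(\l)}$. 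By Corollary \ref{cor}(a), $\E(X_{\l j}\overline{X}_{\l k}) = 0$ whenever $j\neq k$, so every term vanishes and $\E^{(\l)}\Zl = 0$; taking a further expectation gives $\E\Zl = 0$.

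For (b), I would compute $\E|\Zl|^2 = \E\,\E^{(\l)}|\Zl|^2$ by first expanding the conditional second moment. Conditionally on $X^{(\l)}$,
$$\E^{(\l)}|\Zl|^2 = |z|^2 \sum_{j\neq k}\sum_{t\neq s} \R_{jk}^{(\l)}\,\overline{\R_{ts}^{(\l)}}\;\E\bigl(X_{\l j}\overline{X}_{\l k}\overline{X}_{\l t}X_{\l s}\bigr),$$
again using that the $\R^{(\l)}$ entries are $X^{(\l)}$-measurable and the $\l$-th row is independent of $X^{(\l)}$. By Corollary \ref{cor}(b) the fourth-moment factor is zero unless the indices $\{j,k,t,s\}$ pair up, and since $j\neq k$ and $t\neq s$ the only surviving configuration is $j=t$, $k=s$ (the configuration $j=s$, $k=t$ being ruled out — or, if $q=2$, also contributing but handled the same way), and in that case $|\E(\cdots)|\leq n^{-2}$. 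Hence
$$\E^{(\l)}|\Zl|^2 \;\ll\; |z|^2\, n^{-2}\sum_{j\neq k} |\R_{jk}^{(\l)}|^2 \;\leq\; |z|^2\, n^{-2}\sum_{j}\sum_{k} |\R_{jk}^{(\l)}|^2.$$
Now I would invoke Wald's identity (\ref{Wald}) with $T=\{\l\}$, which gives $\sum_k |\R_{jk}^{(\l)}|^2 = \eta^{-1}\Im\R_{jj}^{(\l)}$, so $\sum_{j,k}|\R_{jk}^{(\l)}|^2 = \eta^{-1}\sum_j \Im\R_{jj}^{(\l)} = \eta^{-1}\Im\Tr\R^{(\l)}$. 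Using (\ref{Tr GR}) and (\ref{Interlacing}), $\Im\Tr\R^{(\l)}$ differs from $\Im\Tr G$ by $O(\eta^{-1})$, and $\Im\Tr G = \eta\sum_j |\lambda_j - z|^{-2} \leq \eta^{-1} p$, so $\sum_{j,k}|\R_{jk}^{(\l)}|^2 = O(n\eta^{-2})$. Since $|z| = O(1)$ on $\st$, this yields $\E^{(\l)}|\Zl|^2 = O(n^{-1}\eta^{-2})$ deterministically, and taking expectations gives the claim.

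The main obstacle is the bookkeeping in the fourth-moment step: one must be careful that the index constraints $j\neq k$, $t\neq s$ together with the pairing requirement from Corollary \ref{cor}(b) really do collapse the quadruple sum to a single sum $\sum_{j\neq k}$ of size $O(n^2)$ rather than $O(n^3)$ or worse, and that the diagonal-of-$\R$ terms are controlled uniformly. After that, the only quantitative input is the chain Wald $\to$ (\ref{Tr GR}) $\to$ (\ref{Interlacing}) $\to$ trivial bound on $\Im\Tr G$, which is routine; the bound $|z| = O(1)$ and $\eta$ bounded away from $0$ only polynomially on $\st$ are what keep all error terms in the stated order.
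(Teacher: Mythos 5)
Your part (a) and the reduction in (b) via Corollary \ref{cor}(b) plus Wald's identity match the paper exactly. The only divergence is the final step: to bound $\sum_{j,k}|\R_{jk}^{(\l)}|^2=\eta^{-1}\sum_j\Im\R_{jj}^{(\l)}$, the paper simply applies the pointwise bound $\Im\R_{jj}^{(\l)}\leq|\R_{jj}^{(\l)}|\leq\eta^{-1}$ to get $\sum_j\Im\R_{jj}^{(\l)}\leq n\eta^{-1}$, whereas you detour through $\Tr\R^{(\l)}\to\Tr G^{(\l)}\to\Tr G$ via (\ref{Tr GR}) and (\ref{Interlacing}) and then use $\Im\Tr G\leq p\eta^{-1}$. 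Both routes yield $O(n\eta^{-2})$, but your intermediate claim that ``$\Im\Tr\R^{(\l)}$ differs from $\Im\Tr G$ by $O(\eta^{-1})$'' is not quite right: identity (\ref{Tr GR}) gives $\Tr G^{(\l)}-\Tr\R^{(\l)}=(n-p+1)/z$, whose imaginary part is $-(n-p+1)\eta/|z|^2=O(n\eta)$, so the true discrepancy between $\Im\Tr\R^{(\l)}$ and $\Im\Tr G$ is $O(n\eta+\eta^{-1})$, not $O(\eta^{-1})$. This does not damage your conclusion, since on $\st$ one has $\eta\leq\tau^{-1}$ and hence $n\eta=O(n\eta^{-1})$, so $\Im\Tr\R^{(\l)}=O(n\eta^{-1})$ still holds; but the paper's direct diagonal bound is cleaner and sidesteps the issue entirely.
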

\begin{proof}[Proof of Lemma \ref{Zl2}]
(a) From the definition of $\Zl$ in (\ref{Zl}), we have
$$\E^{(\l)}\Zl=z\sum_{j \neq k}\R_{jk}^{(\l)}\E(X_{\l j}\overline{X}_{\l k})=0,$$
where the first equality follows from the fact that rows of $X$ are independent, and second equality follows from statement (a) of Corollary \ref{cor}. The proof of the result on $\E\Zl$ is similar by replacing $\R_{jk}^{(\l)}$ with $\E\R_{jk}^{(\l)}$.

(b) Expanding $|\Zl|^2$ and taking expectation $\E$ inside, noting that the rows of $X$ are independent, we have
\begin{align*}
\E|\Zl|^2&=|z|^2\E\left|\sum_{j \neq k}X_{\l j}\R_{jk}^{(\l)}\overline{X}_{\l k}\right|^2\\
&=|z|^2\sum_{\substack{j\neq k\\s\neq t}}\E\left(\R_{jk}^{(\l)}\overline{\R}_{st}^{(\l)}\right)\E(X_{\l j}X_{\l t}\overline{X}_{\l k}\overline{X}_{\l s}).
\end{align*}
Since $\db \geq 5$, by using statement (b) of Corollary \ref{cor} and Wald's identity (\ref{Wald}), together with the trivial bound $|\R_{jj}^{(\l)}| \leq \eta^{-1}$, we obtain
\begin{align*}
\E|\Zl|^2&\leq \frac{C|z|^2}{n^2}\sum_{j,k}\E|\R_{jk}^{(\l)}|^2\\
&=\frac{C|z|^2}{n^2\eta}\sum_j\E\Im \R_{jj}^{(\l)} \leq\frac{C}{n\eta^2}.
\end{align*}
Here $C$ is a generic absolute constant which may be different in each occurrence.
\end{proof}
The above estimations lead to the following estimations about $\Yl$.
\begin{lemma}\label{Yl2}
For any $\l \in [1\isep p]$, we have

(a) $\E \Yl=O(n^{-1}\eta^{-1})$;

(b) $\E|\Yl|^2=O(n^{-1}\eta^{-2})$.
\end{lemma}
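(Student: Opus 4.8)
The plan is to derive Lemma \ref{Yl2} directly from the formula \eqref{Yl} for $\Yl$ and the estimates already established for $\Zl$ (Lemma \ref{Zl2}) and for the trace difference via the interlacing property \eqref{Interlacing}. Recall that
$$\Yl = \frac{z}{n}\left(\E\Tr G - \Tr \Gl\right) + \frac{1}{n} - \Zl.$$
So both parts of the lemma reduce to controlling the three summands on the right-hand side. The term $\tfrac{1}{n}$ is deterministic and contributes $O(n^{-1})$, which is absorbed into $O(n^{-1}\eta^{-1})$ since $\eta \le \tau^{-1}$ on $\st$. The term $\Zl$ is handled directly by Lemma \ref{Zl2}: its expectation is $0$ and $\E|\Zl|^2 = O(n^{-1}\eta^{-2})$. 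The only genuinely new ingredient is the trace difference $\E\Tr G - \Tr \Gl$.

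For part (a), I would write $\E\Tr G - \Tr\Gl = \E(\Tr G - \Tr\Gl) + (\E\Tr\Gl - \Tr\Gl)$. Taking $\E$ of the whole expression for $\Yl$, the last bracket vanishes under the outer expectation (since $\E[\E\Tr\Gl - \Tr\Gl] = 0$), so
$$\E\Yl = \frac{z}{n}\,\E\left(\Tr G - \Tr\Gl\right) + \frac{1}{n} - \E\Zl.$$
Now $\E\Zl = 0$ by Lemma \ref{Zl2}(a), and by the interlacing bound \eqref{Interlacing} with $T = \{\l\}$ we have $|\Tr G - \Tr\Gl| \le C\eta^{-1}$ pointwise, hence $|\E(\Tr G - \Tr\Gl)| \le C\eta^{-1}$. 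Since $|z| \le C$ on $\st$, this gives $\left|\tfrac{z}{n}\E(\Tr G - \Tr\Gl)\right| = O(n^{-1}\eta^{-1})$, and combining with the $\tfrac1n$ term yields $\E\Yl = O(n^{-1}\eta^{-1})$.

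For part (b), I would use $|a+b+c|^2 \le 3(|a|^2+|b|^2+|c|^2)$ applied to the three summands of $\Yl$. The $\Zl$ contribution is $\E|\Zl|^2 = O(n^{-1}\eta^{-2})$ by Lemma \ref{Zl2}(b); the $\tfrac1n$ contribution is $O(n^{-2}) = O(n^{-1}\eta^{-2})$. For the trace term, I again split $\E\Tr G - \Tr\Gl = \E(\Tr G - \Tr\Gl) + (\E\Tr\Gl - \Tr\Gl)$. The first piece is deterministic of size $O(\eta^{-1})$, so after dividing by $n$ contributes $O(n^{-2}\eta^{-2})$ to $\E|\cdot|^2$. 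The second piece is a centered random variable; since $\Tr\Gl$ is a Stieltjes-transform-type quantity (times $p$), I would bound its variance using Lemma \ref{deviation} applied to the submatrix $X^{(\l)}$ — indeed $p\,s_{\Gl}(z) = \Tr\Gl$, so $\mathrm{Var}(\Tr\Gl) = p^2\,\mathrm{Var}(s_{\Gl}(z))$, and Lemma \ref{deviation} gives subgaussian concentration with variance proxy $O(p/(n^2\eta^2)) = O(1/(n\eta^2))$; hence $\mathrm{Var}(\Tr\Gl) = O(p^2/(n\eta^2)) = O(n/\eta^2)$, so after the factor $|z/n|^2 = O(n^{-2})$ this contributes $O(n^{-1}\eta^{-2})$. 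Summing the three contributions gives $\E|\Yl|^2 = O(n^{-1}\eta^{-2})$.

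The main obstacle is the variance estimate for $\Tr\Gl$ in part (b): one must be careful that Lemma \ref{deviation}, as stated, applies to a matrix $\M$ with independent rows, which $X^{(\l)}$ indeed has, and that the resulting subgaussian tail integrates to the stated variance bound (this is routine: integrate $2\exp(-cn\eta^2 r^2)$ against $2r\,dr$). An alternative route that avoids invoking concentration twice would be to bound $\mathrm{Var}(\Tr\Gl)$ directly via a martingale/row-by-row difference argument as in the proof of Lemma \ref{deviation}, each row swap changing $\Tr\Gl$ by at most $O(\eta^{-1})$ and there being $p$ rows, giving $\mathrm{Var}(\Tr\Gl) = O(p\eta^{-2}) = O(n\eta^{-2})$ — the same bound. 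Either way the arithmetic is elementary once the interlacing bound \eqref{Interlacing} and Lemma \ref{deviation} are in hand, so I do not anticipate any serious difficulty beyond bookkeeping of the $\eta$-powers.
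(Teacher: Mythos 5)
Your proof is correct, and part (a) reproduces the paper's argument almost verbatim: separate off $\E\Zl=0$, apply the interlacing bound \eqref{Interlacing} to $\E(\Tr G - \Tr\Gl)$, and use $|z|\le C$ on $\st$.

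For part (b) you diverge from the paper's proof in two ways, both legitimate. The paper writes the exact orthogonal decomposition
$\E|\Yl|^2=\E|\Yl-\E^{(\l)}\Yl|^2+\E|\E^{(\l)}\Yl-\E\Yl|^2+|\E\Yl|^2$
(law of total variance with conditioning on $X^{(\l)}$), which cleanly isolates $V_1=\E|\Zl|^2$ (because $\Yl-\E^{(\l)}\Yl=-\Zl$ exactly) and $V_2=\tfrac{|z|^2}{n^2}\mathrm{Var}(\Tr\Gl)$. You instead split $\E\Tr G - \Tr\Gl$ by hand and use the inequality $|a+b+c|^2\le 3(|a|^2+|b|^2+|c|^2)$; this reaches the same three quantities with a worse constant but is otherwise equivalent. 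For the key estimate $\mathrm{Var}(\Tr\Gl)=O(n\eta^{-2})$, the paper constructs a Doob martingale $\gamma_m=\E^{(T_{m-1})}\Tr\Gl-\E^{(T_m)}\Tr\Gl$ over the rows $m\ne\l$, bounds each increment by $C\eta^{-1}$ via \eqref{Interlacing}, and sums the $p-1$ squared increments. Your primary route of invoking Lemma~\ref{deviation} on $X^{(\l)}$ and integrating the subgaussian tail gives $\mathrm{Var}(s_{\Gl})=O(p/(n^2\eta^2))$, hence $\mathrm{Var}(\Tr\Gl)=p^2\,\mathrm{Var}(s_{\Gl})=O(p^3/(n^2\eta^2))=O(n\eta^{-2})$, which is the same order; this is sound, though it uses the full McDiarmid exponential bound where a second-moment (Efron--Stein) estimate suffices, so it is heavier machinery than needed. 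The ``alternative route'' you sketch at the end --- a row-by-row martingale variance bound with increments of size $O(\eta^{-1})$ --- is precisely what the paper does. The only cosmetic issue: $X^{(\l)}$ has $p-1$ nonconstant rows (row $\l$ is zeroed), so strictly the sum/effective $p$ should be $p-1$, which does not change the order.
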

\begin{proof}[Proof of Lemma \ref{Yl2}]
(a) By (\ref{Yl}) we get
$$\E \Yl=\frac{z}{n}\E(\Tr G-\Tr \Gl)+\frac{1}{n}-\E \Zl=\frac{z}{n}\E(\Tr G-\Tr \Gl)+\frac{1}{n},$$
where the second equality follows from (a) of Lemma \ref{Zl2}. Using (\ref{Interlacing}) we easily obtain
$$|\E \Yl| \leq \frac{C|z|}{n\eta} \leq \frac{C}{n\eta}.$$
(b) We split $\E|\Yl|^2$ as
\begin{equation}\label{Yl3}
\E|\Yl|^2=\E|\Yl-\E\Yl|^2+|\E\Yl|^2=V_1+V_2+|\E\Yl|^2,
\end{equation}
where
$$V_1=\E|\Yl-\E^{(\l)}\Yl|^2, \quad V_2=\E|\E^{(\l)}\Yl-\E\Yl|^2.$$
We first estimate $V_1$. By the definition of $\Yl$ in (\ref{Yl}) and applying (a) of Lemma \ref{Zl2}, we see that
$$\Yl-\E^{(\l)}\Yl=-\Zl+\E^{(\l)}\Zl=-\Zl.$$
Then by (b) of Lemma \ref{Zl2} we obtain
\begin{equation}\label{V1}
V_1=\E|\Zl|^2=O(n^{-1}\eta^{-2}).
\end{equation}

Next we estimate $V_2$. Again by (\ref{Yl}) and Lemma \ref{Zl2}, we have
$$\E^{(\l)}\Yl-\E\Yl=-\frac{z}{n}(\Tr \Gl-\E\Tr \Gl)-(\E^{(\l)}\Zl-\E\Zl)=-\frac{z}{n}(\Tr \Gl-\E\Tr \Gl).$$

Hence
\begin{align}\label{V2}
V_2&=\frac{|z|^2}{n^2}\E|\Tr \Gl-\E\Tr \Gl|^2\nonumber\\
&=\frac{|z|^2}{n^2}\sum_{m \neq \l}\E|\E^{(T_{m-1})}\Tr \Gl-\E^{(T_m)}\Tr \Gl|^2,
\end{align}
where $T_0:=\emptyset$ and $T_m:=[1\isep m]$ for $m \in [1\isep p]$. The second equality follows from applying successively the law of total variance to the rows of $\Phi_n$.

For $m \neq \l$, denote $\gamma_m:=\E^{(T_{m-1})}\Tr \Gl-\E^{(T_m)}\Tr \Gl$ and $\sigma_m:=\Tr \Gl-\Tr G^{(\l,m)}$. It is easy to check that
$$\gamma_m=\E^{(T_{m-1})}\sigma_m-\E^{(T_m)}\sigma_m.$$
Thus by (\ref{Interlacing}) we have $|\gamma_m| \leq C\eta^{-1}$.

Putting this into (\ref{V2}) yields
$$V_2 \leq \frac{C|z|^2}{n\eta^2} \leq \frac{C}{n\eta^2}.$$
Plugging the estimates of $\E\Yl$ in statement (a), $V_1$ in (\ref{V1}) and $V_2$ above into the equation (\ref{Yl3}), we obtain the desired estimate of $\E|\Yl|^2$. This finishes the proof of Lemma \ref{Yl2}.
\end{proof}
We can now complete the proof of Theorem \ref{thm2}.
\begin{proof}[Proof of Theorem \ref{thm2}]
Taking reciprocal and then expectation on both sides of (\ref{diagonal2}), we get
\begin{equation}\label{diagonal3}
\E G_{\l\l}=\E\frac{1}{\an+\Yl}=\frac{1}{\an}+\Al=\frac{1}{\an+\dl},
\end{equation}
where
$$\an=1-y-z-yzs_n(z),$$
\begin{equation}\label{Al}
\Al=\E\frac{1}{\an+\Yl}-\frac{1}{\an}=-\frac{1}{\an^2}\E \Yl+\frac{1}{\an^2}\E\frac{\Yl^2}{\an+\Yl},
\end{equation}
and
\begin{equation}\label{dl}
\dl=\left(\frac{1}{\an}+\Al\right)^{-1}-\an=-\frac{\an^2 \Al}{1+\an \Al}.
\end{equation}
Multiplying $\an^2$ on both sides of (\ref{Al}) and using the estimate $|\an+\Yl| \geq \eta$, we obtain
\begin{equation}\label{Al2}
|\an^2 \Al|=\left|-\E \Yl+\E\frac{\Yl^2}{\an+\Yl}\right|\leq |\E \Yl|+\frac{1}{\eta}\E|\Yl|^2.
\end{equation}

Putting the results of Lemma \ref{Yl2} in (\ref{Al2}), we get
$$|\an^2\Al| \leq \frac{C}{n\eta^3}.$$
Using the fact that $|\an|^{-1}\leq \eta^{-1}$, we have $|\an\Al|\leq Cn^{-1}\eta^{-4}$, so that $|1+\an\Al| \geq C$.

Substituting all these into (\ref{dl}) yields
$$|\dl| \leq \frac{C}{n\eta^3}$$
for all $z \in \st$.

Then the theorem follows directly from summing both sides of (\ref{diagonal3}) for all $\l \in [1\isep p]$ and then dividing both sides by $p$.
\end{proof}



\section{Appendix}\label{Appen}
In this section, we use Helffer-Sj\"{o}strand formula to prove Theorem \ref{thm} from Theorem \ref{Stieltjes}. This is a standard procedure well-known in random matrix theory. We follow the idea based on \cite[Appendix C]{Local law}.

First we define the signed measure $\hmu$ and its Stieltjes transform $\hs$ by
$$\hmu:=\mu_n-\rmp, \hs(z):=\int \frac{\hmu(dx)}{x-z}=s_{\G}(z)-s_{\MP}(z).$$
Now fix $\ep \in (0,1/4)$ and define $\weta:=n^{-1/4+\ep/2}$. For any interval $\I \subset [a-1,b+1]$, where $a$ and $b$ are constants defined in (\ref{ab}), we choose a smoothed indicator function $f \equiv f_{\I,\weta} \in C_c^\infty(\mathbb{R}; [0,1])$ satisfying $f(u)=1$ for $u \in \I$, $f(u)=0$ for $\mathrm{dist}(E,\I) \geq \weta, \|f'\|_\infty \leq C\weta^{-1}$, and $\|f''\|_\infty \leq C\weta^{-2}$. These imply that the supports of $f'$ and $f''$ have Lebesgue measure bounded by $2\weta$. In addition, choose a smooth even cutoff function $\chi \in C_c^\infty(\mathbb{R}; [0,1])$ with $\chi(v)=1$ for $|v| \leq 1, \chi(v)=0$ for $|v| \geq 2$ and $\|\chi'\|_\infty \leq C$. Throughout this section, $C$ represents a positive constant whose value may vary in each appearance.

Then by the Helffer-Sj\"{o}strand formula, we get
$$\int f(\lambda)\hmu(\d\lambda)=\frac{1}{2\pi}\iint(\pa_u+\i\pa_v)[f(u)+\i vf'(u))\chi(v)]\hs(u+\i v)\d v\d u.$$
As LHS is real, we can write as
\begin{align}
\int f(\lambda)\hmu(\d\lambda)&=-\frac{1}{2\pi}\int\int_{|v| \leq \weta}f''(u)\chi(v)v\Im\hs(u+\i v)\d v\d u\label{I1}\\
&-\frac{1}{2\pi}\int\int_{|v| > \weta}f''(u)\chi(v)v\Im\hs(u+\i v)\d v\d u\label{I2}\\
&+\frac{\i}{2\pi}\iint(f(u)+\i vf'(u))\chi'(v)\hs(u+\i v)\d v\d u\label{I3}
\end{align}
First, by the trivial identity $\hs(u-\i v)=\overline{\hs(u+\i v)}$ and the fact that $\hs$ is Lipschitz continuous on the compact set $\mathbf{S}_{\ep/2}$, we can easily extend Theorem \ref{Stieltjes} as follows:
\begin{lemma}\label{lem2}
For any fixed $\ep > 0$, we have, with high probability,
$$|\hs(u+\i v)| \leq n^{\ep/2}(n^{-1/4}+n^{-1}|v|^{-7/2}),$$
for all $u,v \in \mathbb{R}$ such that $\min\{|u-a|,|u-b|\} \leq 2\ep^{-1}$ and $|v| \in [\weta,2\ep^{-1}]$.
\end{lemma}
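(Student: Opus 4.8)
The plan is to upgrade Theorem \ref{Stieltjes}, which holds with high probability for each \emph{fixed} $z\in\st$, into a statement valid simultaneously for all $z$ in the region described, and then to remove the restriction $v>0$ by reflection. To begin, I would invoke Theorem \ref{Stieltjes} with the particular choice $\tau=\ep/2$; then $\mathbf{S}_{\ep/2}$ consists exactly of the points $u+\i v$ with $\min\{|u-a|,|u-b|\}\le 2\ep^{-1}$ and $v\in[\weta,2\ep^{-1}]$, and the prefactor $n^{\tau}$ becomes $n^{\ep/2}$, so the asserted bound is precisely Theorem \ref{Stieltjes} made uniform in $z$ and extended to $\Im z<0$.

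The uniformity in $z$ is obtained by a standard net argument. Let $\mathcal N\subset\mathbf{S}_{\ep/2}$ be a grid of mesh $n^{-3}$; since $\mathbf{S}_{\ep/2}$ lies in a box of side $O(\ep^{-1})$, the set $\mathcal N$ has $O(n^{6})$ points for fixed $\ep$. Given $D>0$, Theorem \ref{Stieltjes} controls $|\hs(w)|$ at each $w\in\mathcal N$ with probability at least $1-n^{-D-6}$, so a union bound shows that the bound holds at every grid point simultaneously with high probability. For an arbitrary $z=u+\i v$ in the region with $v>0$, pick the nearest $w\in\mathcal N$, so that $|z-w|\le n^{-3}$. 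Because $\mu_n$ and $\rmp$ are probability measures on $\mathbb R$, one has the deterministic bound $|\hs'(\zeta)|\le 2(\Im\zeta)^{-2}$ on the upper half-plane, hence $|\hs'(\zeta)|\le 2\weta^{-2}=2n^{1/2-\ep}$ whenever $\Im\zeta\ge\weta$; the segment from $w$ to $z$ stays in this region, so $|\hs(z)-\hs(w)|\le 2n^{1/2-\ep}\cdot n^{-3}=2n^{-5/2-\ep}$, which is negligible next to the target term $n^{\ep/2-1/4}$. Also $n^{-3}\ll\weta$ forces $\Im w\ge\tfrac12\Im z$, so $(\Im w)^{-7/2}\le 2^{7/2}(\Im z)^{-7/2}$, and the extra constant is absorbed into the prefactor (for instance by replacing $n^{\ep/2}$ by a constant times $n^{\ep/4}$, hence by $n^{\ep/2}$ for $n$ large). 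This yields the claimed bound for all $z$ in the region with $\Im z>0$.

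It remains to treat $\Im z<0$. Since $\hmu=\mu_n-\rmp$ is a signed measure supported on $\mathbb R$, we have $\hs(\bar z)=\overline{\hs(z)}$, so $|\hs(u-\i v)|=|\hs(u+\i v)|$ for $v>0$; as the right-hand side of the desired inequality depends on $v$ only through $|v|$, the case $\Im z<0$ follows from the case already proved.

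I do not expect a genuine obstacle. The only points needing care are that the net $\mathcal N$ has only polynomially many points---so the union bound still leaves failure probability at most $n^{-D}$ for every $D$---and that the Lipschitz constant of $\hs$ on the region is merely polynomial in $n$, namely $\weta^{-2}=n^{1/2-\ep}$; both hold precisely because $\weta=n^{-1/4+\ep/2}$ is a fixed negative power of $n$. The remaining bookkeeping---that the $z$-dependent right-hand side $n^{-1}(\Im z)^{-7/2}$ is stable under the $n^{-3}$ perturbation and that constants may be absorbed into the prefactor---is routine.
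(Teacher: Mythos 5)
Your proof is correct and takes essentially the same approach the paper intends: the paper's one-line justification invokes precisely the reflection identity $\hs(\bar z)=\overline{\hs(z)}$ together with Lipschitz continuity of $\hs$ on $\mathbf{S}_{\ep/2}$, and your net/union-bound argument is the standard way to make that Lipschitz-continuity step rigorous.
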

We may now estimate the three terms appearing in (\ref{I1})-(\ref{I3}). First, for the term in (\ref{I3}), by using the fact that $\chi$ is even with support in $[-2,2]\setminus (-1,1)$, we have
\begin{equation}
\left|\iint (f(u)+\i vf'(u))\chi'(v)\hs(u+\i v)\d v\d u\right| \leq C\weta
\end{equation}
with high probability.

We next estimate the term in (\ref{I1}). Since $v$ is small, we cannot apply Lemma \ref{lem2} directly. However it can be proved that for all $u$, the function $v \mapsto v\Im(s_{\G}(u+\i v))$ is nondecreasing for $v > 0$. This implies, for $v \in (0,\weta)$,
$$v\Im\hs(u+\i v) \leq v\Im s_{\G}(u+\i v) \leq \weta\Im s_{\G}(u+\i\weta) \leq \weta[n^{\ep/2}(n^{-1/4}+n^{-1}\weta^{-7/2})+C] \leq C\weta$$
with high probability.

Hence we have
\begin{equation}
\left|\int_\mathbb{R}\int_{|v| \leq \weta} f''(u)\chi(v)v\Im(\hs(u+\i v))\d v\d u\right| \leq \weta^{-1}\int_{|v|\leq \weta}C\weta\d v\leq C\weta .
\end{equation}

For the term in (\ref{I2}), we have
\begin{align*}
&\int_\mathbb{R}\int_{|v| > \weta} f''(u)\chi(v)v\Im(\hs(u+\i v))\d v\d u\\
&= \left[\int_{|v| > \weta} f'(u)\chi(v)v\Im(\hs(u+\i v))\d v\right]_{u=\inf \supp f''}^{u=\sup \supp f''}-\int_{\supp f''}\int_{|v| > \weta} f'(u)\chi(v)v\frac{\pa}{\pa u}\Im(\hs(u+\i v))\d v\d u
\end{align*}
The first term is zero. As for the second, by Cauchy-Riemann equation, we have
\begin{align*}
&\int_\mathbb{R}\int_{|v| > \weta} f''(u)\chi(v)v\Im(\hs(u+\i v))\d v\d u\\
&=\int_{\supp f''}\int_{|v| > \weta} f'(u)\chi(v)v\frac{\pa}{\pa v}\Re(\hs(u+\i v))\d v\d u\\
&=2\left[\int_{\supp f''} f'(u)\chi(v)v\Re(\hs(u+\i v))\d v\right]_{\weta}^\infty-2\int_{\supp f''}\int_{\weta}^\infty f'(u)(\chi'(v)v+\chi(v))\Re(\hs(u+\i v))\d v\d u
\end{align*}
For the first term, we get
$$\left|\left[\int_{\supp f''} f'(u)\chi(v)v\Re(\hs(u+\i v))\d u\right]_{\weta}^\infty\right|=\left|\int_{\supp f''} f'(u)\chi(\weta)\weta\Re(\hs(u+\i \weta))\d u\right| \leq C\weta$$
For the second term, we get
\begin{align*}
\left|\int_{\supp f''}\int_{\weta}^\infty f'(u)(\chi'(v)v+\chi(v))\Re(\hs(u+\i v))\d v\d u\right|&\leq Cn^{\ep/2}\int_{\weta}^2[n^{-1/4}(v+1)+n^{-1}(v^{-5/2}+v^{-7/2})]\d v\\
&\leq Cn^{\ep/2}(n^{-1/4}+n^{-1}\weta^{-5/2})\\
&\leq C\weta
\end{align*}
Putting all together, we get
$$\left|\int f(\lambda)\hmu(\d\lambda)\right| \leq C\weta$$
with high probability.

Now we have to return from the smooth function $f$ to the indicator function of $\I$. If $\I \subset [a-1,b+1]$, then we get
$$\mu_n(\I) \leq \int f_{\I,\weta}(\lambda)\mu_n(\d\lambda)\leq \int f_{\I,\weta}(\lambda)\rmp(\d\lambda)+C\weta\leq \rmp(\I)+C\weta$$
with high probability. On the other hand, denote by $\I':=\{x \in \mathbb{R}: \mathrm{dist}(x,\I^c) \geq \weta\}$ (which is hence a subset of $\I$), then we also have
$$\mu_n(\I) \geq \int f_{\I',\weta}(\lambda)\mu_n(\d\lambda)\geq \int f_{\I',\weta}(\lambda)\rmp(\d\lambda)-C\weta \geq \rmp(\I)-C\weta$$
with high probability. Hence
$$|\hmu(\I)| \leq Cn^{-1/4+\ep/2} \leq n^{-1/4+\ep}$$
with high probability. As $\ep \in (0,1/4)$ is arbitrary, we conclude that $\hmu(\I)=O_\prec(n^{-1/4})$ for any $\I \subset [a-1,b+1]$.

Then for a general interval $\I \subset \mathbb{R}$, we first note that we have proved that $\hmu([a,b])=O_\prec(n^{-1/4})$. As $\mu_n$ is a probability measure and $\rmp([a,b])=1$, we deduce that $\mu_n(\mathbb{R} \setminus [a,b])=O_\prec(n^{-1/4})$. Therefore we have
$$\mu_n(\I)=\mu_n(\I \cap [a,b])+\mu_n(\I \setminus [a,b])=\mu_n(\I \cap [a,b])+O_\prec(n^{-1/4})=\rmp(\I)+O_\prec(n^{-1/4})$$
where in the last step we use that $\hmu(\I)=O_\prec(n^{-1/4})$ for $\I \subset [a,b]$. From the calculation it is easy to see that the above estimate holds simultaneously for all $\I$ (i.e. the constant absorbed by $O_\prec$ is independent of $\I$).

This completes the proof of Theorem \ref{thm}.



\section*{Acknowledgments}
The second author would like to thank David Forney, Rob Calderbank and Neil Sloane for stimulating discussions. The third author would like to thank Zhigang Bao for comments and suggestions.

\end{document}